\newtheorem{theorem}{Theorem}[section]
\newtheorem{lemma}[theorem]{Lemma}
\newtheorem{prop}[theorem]{Proposition}
\theoremstyle{definition}
\newtheorem{example}[theorem]{Example}
\theoremstyle{remark}
\newtheorem{remark}[theorem]{Remark}
\numberwithin{equation}{section}
\newcommand{\del}{\partial}
\newcommand{\calF}{\mathscr{F}}
\newcommand{\frakm}{{\mathfrak{m}}}
\newcommand{\frakp}{{\mathfrak{p}}}
\newcommand{\CC}{\mathds{C}}
\newcommand{\KK}{\mathds{K}}
\newcommand{\NN}{\mathds{N}}
\newcommand{\QQ}{\mathds{Q}}
\newcommand{\RR}{\mathds{R}}
\newcommand{\ZZ}{\mathds{Z}}
\newcommand{\II}{\mathds{I}}
\DeclareMathOperator{\conv}{Conv}
\DeclareMathOperator{\gr}{gr}
\DeclareMathOperator{\Hom}{Hom}
\DeclareMathOperator{\Spec}{Spec}
\DeclareMathOperator{\Var}{Var}
\DeclareMathOperator{\Hole}{Hole}
\DeclareMathOperator{\Ch}{Ch}
\DeclareMathOperator{\cycle}{CC}
\DeclareMathOperator{\ann}{ann}
\begin{document}

\title{$D$-module structure of Local Cohomology modules of toric algebras}

\author{Jen-Chieh Hsiao}
\address{Department of Mathematics, Purdue University, 150 N. University Street, West Lafayette, IN~47907, USA} 
\email{jhsiao@math.purdue.edu}
\thanks{The author was partially supported by NSF under grant DMS~0555319 and DMS~0901123.}

\subjclass[2010]{13D45,13N10,14M25}

\begin{abstract}
 Let $S$ be a toric algebra over a field $\KK$ of characteristic $0$ and let $I$ be a monomial ideal of $S$. We show that the local cohomology 
 modules $H^i_I(S)$ are of finite length over the ring of differential operators $D(S;\KK)$, generalizing the classical case of a polynomial algebra $S$.
 As an application, we compute the characteristic cycles of some local cohomology modules. 
\end{abstract}

\maketitle

\section{Introduction}

 Lyubeznik \cite{Lyubeznik} introduced an approach of studying local cohomology modules using the theory of $D$-modules. 
 He obtained many finiteness properties of the local cohomology modules $H_I^i(R)$ when $R$ is a regular ring containing a field of characteristic $0$. 
 For example, taking advantage of holonomicity of $H^i_I(R)$ as a $D$-module, he showed that for any maximal ideal $\frakm$ the number 
 of associated prime ideals  of $H^i_I(R)$  contained in $\frakm$ is finite and that all Bass numbers of $H^i_I(R)$ are finite. 
 When $R$ is a regular local ring of positive characteristic, analogous results were obtained by Huneke and Sharp using the Frobenius functor \cite{HS}. 
 When $R$ is not regular, the situation is more subtle. There are characteristic-free examples where $H^i_I(R)$ have infinitely many associated primes \cite{SS}.
 Also, an example by Hartshorne \cite{Ha} shows that in general the Bass numbers can be infinite (see Example~~\ref{hartshorne}).
 
 After \cite{Lyubeznik}, there have been several studies on the finiteness properties of $R_x$, and $H^i_I(R)$ as D-modules for a regular ring $R$, among them \cite{Bog95},
  \cite{Bog02}, \cite{Lyu97}, \cite{ABL}.
 The first $D$-finiteness result of $R_x$ for a singular ring $R$ is due to Takagi and Takahashi \cite{TT} which says $R_x$ is generated by $x^{-1}$ over $D$ 
 when $R$ is a Noetherian graded ring with finite F-representation type. In particular, their theorem applies to the case where $R$ is a normal toric algebra over a perfect field of
 positive characteristic.
 
 In the present article, we study finiteness properties of the localizations $S_f$ and the local cohomology modules $H^i_I(S)$ as $D$-modules 
 where $S$ is a toric algebra (not necessarily normal) over a field $\KK$ of characteristic $0$, $f$ is a monomial element and $I$ is a monomial ideal of $S$. 
 In this case, the ring of differential operators $D(S):=D(S;\KK)$ is much more complicated than the case where $S$ is regular. 
 Using the natural grading of $D(S)$ introduced by Jones \cite{Jones} and Musson \cite{Musson95}, Saito and Traves \cite{Saito-Traves01,Saito-Traves04} 
 gave a detailed description of $D(S)$. Based on their results, we prove that any localization $S_f=S[f^{-1}]$ of $S$ is 
 generated by $f^{-1}$ over $D(S)$ (Theorem~\ref{cyclic}). 
 This implies immediately that $H^i_I(S)$ are $D(S)$-finitely generated if $D(S)$ is left Noetherian. Unfortunately, $D(S)$ is not always left Noetherian \cite{Saito-Takahashi09}. 
 Nonetheless, we can show that $H_I^i(S)$ is actually of finite length as a $D(S)$-module (Theorem~\ref{finite length}). In view of Hartshorne's example (Example~\ref{hartshorne}), 
 this result is quite surprising.
 
 As an application, we compute the characteristic cycles of some local cohomology modules $H^i_I(S)$. Characteristic cycles are 
 formal sums of subvarieties (counted with multiplicities) of the characteristic variety of a $D$-module $M$. Here, the characteristic variety $\Ch (M)$
 is the support of the associated
 graded module $\gr M$ in the spectrum $\Spec(\gr D(S)$) of the associated graded ring $\gr D(S)$.
 When $S$ is a polynomial algebra, one can explicitly compute the Bass numbers and the associated primes of $H_I^i(S)$ from its
 characteristic cycles \cite{Montaner2}. The cohomological dimension of $I$ and the Lyubeznik numbers can also be computed from them \cite{Montaner1}. 
 Through our finiteness results of $H^i_I(S)$, we are able to compute the characteristic cycles of some local cohomology modules.
 We will show that for normal toric algebras $S$ (in fact, for a more general class of toric algebras) the characteristic variety $\Ch (H^{{\dim} S}_{\frak m}(S))$ of the 
 top local cohomology with maximal support is abstractly isomorphic to the ambient toric variety $\Spec (S)$  (Theorem~\ref{chmax}).
 
 In section~\ref{pre}, we briefly recall the notions of local cohomology, toric algebras and the ring of differential operators of a commutative algebra over a field. 
 We also describe the structure of rings of differential operators over toric algebras following the notations in \cite{Saito-Traves01} and \cite{Saito-Traves04}. 
 In section~\ref{finiteness}, our main results on the finiteness properties mentioned above are presented. Also, we relate our finiteness results to the notion of sector partition introduced in \cite{Schafer-Schenzel90} and \cite{Matusevich-Miller}. 
 Some discussions on $\gr D(S)$ and the computations of characteristic cycles are in section~\ref{charcycle}. As suggested by the referee, some relations between
 our results in section \ref{charcycle} and the recent work of Saito \cite{S10} are discussed (see Remarks~\ref{ref1}, \ref{ref2}).\\
 \vskip 1pt
 \textbf{Acknowledgment.}
 This work originates from a conversation between Uli Walther and William Traves.
 The author is grateful to his advisor Uli Walther for introducing this problem and providing many useful discussions and comments.
 He would also like to thank Ezra Miller for the comments on sector partition and Example~\ref{hartshorne}, and Christine Berkesch for carefully reading this paper.
 Special thanks go to the referee of this paper who helped improve the presentation, simplified the proof of Theorem~\ref{cyclic}, 
 and pointed out the relations between our results and the results in \cite{S10}.

\section{Preliminaries} \label{pre}

\subsection{Local Cohomology}

General facts regarding local cohomology can be found in \cite{24h} or \cite{BS}. Here, we only recall some basics.

Let $R$ be a Noetherian commutative ring, $M$ an $R$-module, and $I$ an ideal of $R$. Define $\Gamma_I(M) := \varinjlim \Hom_R(R/I^k,M)$. Then $\Gamma_I$ is a left exact 
$R$-linear covariant functor and the $i$-th local cohomology functor $H^i_I$ is defined to be its $i$-th right derived functor. We call $H^i_I(M)$ the $i$-th local cohomology module of $M$ supported at the ideal $I$. If $I$ is generated by $f_1,\dots, f_t$, then $H^i_I(M)$ is the $i$-th cohomology of the \v Cech complex
 $$ 0 \rightarrow M \rightarrow \displaystyle {\bigoplus^t_{i=1}} M_{f_i} \rightarrow \displaystyle {\bigoplus_{1\leq i<j \leq t}} M_{f_if_j} \rightarrow \cdots \rightarrow M_{f_1\cdots f_t} \rightarrow 0. $$ 
 
\subsection{Toric Algebras}
 We introduce some notations for later use. For more information on toric algebras, the reader is referred to \cite{Fu}, \cite{MS} or \cite{24h}. 
 
 Let $A$ be a $d\times n$ integer matrix with columns $a_1,\dots,a_n$. Assume $\ZZ A= \ZZ^d$. 
 For a field $\KK$, the semigroup subring $S_{A,\KK} := \KK[\NN A] = \KK[t^{a_1},\dots,t^{a_n}]$ of the Laurent polynomial ring $\KK[\ZZ^d] =\KK[t_1^{\pm 1},\dots, t_d^{\pm 1}]$ 
 is called the toric algebra associated to the matrix $A$. Denote $\widetilde{\NN A} := \RR_{\geq 0} A \cap \ZZ^d $ as the saturation of $\NN A$. 
 Then $\widetilde{S_{A,\KK}}:= \KK[\widetilde{\NN A}]$ is the normalization of $S_{A,\KK}$.
 
\subsection{Rings of Differential Operators} \label{diff}
 For a commutative algebra $R$ over a field $\KK$, set 
 $D_0(R;\KK) := R$ and for $ i >0$ $$D_i(R;\KK) := \{ f \in \Hom_{\KK}(R,R) \mid [f,r] \in D_{i-1}(R;\KK) \text{ for all } r \in R \}.$$  
 Then the ring of differential operators is defined to be $$ D(R;\KK) := \displaystyle{\bigcup_j D_j(R;\KK)}.$$ 
 When $R$ is a polynomial ring over a field $\KK$ of characteristic $0$, $D(R;\KK)$ is the usual Weyl algebra. 
 In this paper, a module over $D(R;\KK)$ means a left $D(R;\KK)$-module.
 
 \begin{lemma} If $M$ is a $D(R;\KK)$-module and $f \in R$ then the $R$-module structure on $M_f$ extends uniquely to a $D(R;\KK)$-module 
 structure such that the natural map $M \rightarrow M_f$ is a $D(R;\KK)$-module homomorphism. 
 In particular, via the \v Cech complex $H^i_I(M)$ has a natural $D(R;\KK)$-module structure. 
 \end{lemma}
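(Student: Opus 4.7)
The plan is to prove uniqueness and existence of the extended action separately, in both cases organizing the argument by induction on the order filtration $R = D_0(R;\KK) \subset D_1(R;\KK) \subset \cdots$ on $D(R;\KK)$. The basic mechanism is the commutator relation: for any $\theta \in D_i(R;\KK)$ and any $g \in R$ one has $[\theta, g] \in D_{i-1}(R;\KK)$, so in any $D(R;\KK)$-module $\theta \circ g = g\circ\theta + [\theta,g]$.

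For uniqueness, suppose some extension of $\theta \in D_i(R;\KK)$ to $M_f$ exists. Using $f^n \cdot (m/f^n) = m$ in $M_f$ together with the commutator relation applied to $g = f^n$, any such extension must satisfy
$$f^n \cdot \theta(m/f^n) \;=\; \theta(m) - [\theta,f^n](m/f^n).$$
Since $[\theta,f^n] \in D_{i-1}(R;\KK)$, its action on $M_f$ is uniquely determined by the inductive hypothesis; and since $f$ acts invertibly on $M_f$, multiplication by $f^n$ is a bijection on $M_f$. Hence $\theta(m/f^n)$ is uniquely prescribed. For existence I would take the displayed formula as the \emph{definition},
$$\theta(m/f^n) \;:=\; f^{-n}\bigl(\theta(m) - [\theta,f^n](m/f^n)\bigr),$$
with the inner term interpreted via the inductive hypothesis, and with the base case $\theta \in D_0 = R$ handled by the given $R$-module structure on $M_f$. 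One must then verify (i) independence of the chosen representative of $m/f^n$, (ii) additivity in $\theta$ and compatibility with the algebra multiplication $(\theta_1\theta_2)(x) = \theta_1(\theta_2(x))$, and (iii) $D(R;\KK)$-linearity of $M \to M_f$, $m \mapsto m/1$. Point (iii) is essentially built into the recursion (take $n=0$ and note $[\theta,1]=0$). Points (i) and (ii) follow by a second nested induction on the orders of the operators involved, via the Leibniz-type identity $[\theta_1\theta_2, f^n] = \theta_1[\theta_2,f^n] + [\theta_1,f^n]\theta_2$ together with the annihilator relation defining $m/f^n = m'/f^{n'}$.

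I expect the main obstacle to be the bookkeeping in (i) and (ii): the recursion unfolds into iterated commutators $[\cdots[[\theta,f],f],\ldots,f]$, which must be tracked carefully, although they terminate after at most $\mathrm{ord}(\theta)$ steps since each bracket drops the order. With this extension in hand, the local cohomology statement follows formally. The \v Cech complex has terms $M_{f_{i_1}\cdots f_{i_k}}$, each of which inherits a $D(R;\KK)$-module structure by the first part of the lemma applied iteratively (using $M_{gh} = (M_g)_h$). Each \v Cech differential is, up to sign, a canonical localization map $M_g \to M_{gh}$, which is $D(R;\KK)$-linear by the uniqueness of the extended action applied with $M := M_g$ and $f := h$. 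Therefore the subquotients $H^i_I(M) = H^i(\check{C}^\bullet)$ inherit a natural $D(R;\KK)$-module structure.
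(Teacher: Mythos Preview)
Your proposal is correct; the inductive argument on the order filtration via the commutator relation is the standard route to this result. The paper itself does not supply a proof but simply cites \cite{Lyu00}, Example~(b), where this construction is carried out; your sketch is essentially that argument, so there is no genuine difference in approach to compare.
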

 \begin{proof} See \cite{Lyu00}, Example (b).
 \end{proof}
 
 When $R$ is a regular algebra over a field $\KK$ of characteristic $0$, $D(R;\KK)$ is well understood (see e.g. \cite{Bj}). In this case, 
 the local cohomology modules $H^i_I(R)$ are holonomic as $D(R;\KK)$-modules and hence are of finite length (see \cite{Lyubeznik}). 
 This essential property enables Lyubeznik to achieve many finiteness results of the local cohomology modules. 

Unfortunately, $D(R;\KK)$ does not behave well when $R$ is singular; we don't have a notion of holonomicity in this case. 
This complicates the study of $H^i_I(R)$ via the theory of $D$-modules. 
On the bright side, when $R= S_{A,\KK}$ is a toric algebra over an algebraically closed field $\KK$ of characteristic $0$, 
there is a nice combinatorial structure for $D(R;\KK)$ which we will present in the next subsection. 
Our finiteness results about local cohomology modules substantially rely on this structure.

\subsection{Rings of Differential Operators over Toric Algebras}
In the rest of this paper, we denote $S_A := S_{A,\KK}$ where $\KK$ is an algebraically closed field of characteristic $0$.
Following \cite{Saito-Traves01}, the noncommutative ring $D_A:= D(S_A,\KK)$ can be described as a $\ZZ^d$-graded subring of 
$$D(\KK[\ZZ^d];\KK)= \KK[t_1^{\pm 1},\dots, t_d^{\pm 1}]\langle \del_1,\dots, \del_d \rangle$$ where $[\del_i, t_j]=\delta_{ij}$, $[\del_i, t_j^{-1}]=-\delta_{ij}t_j^{-2}$, 
and the other pairs of variables commute. More precisely, with the notation $\theta_i := t_i\del_i$, one has
 $$D_A = \displaystyle{\bigoplus_{a\in \ZZ^d} t^a \II(\Omega(a))}$$ where $\Omega(a) = \NN A \setminus (-a+ \NN A)$ and $\II(\Omega(a))$ is the vanishing ideal of $\Omega(a)$ 
 in $\KK[\theta_1, \dots, \theta_d]$.

\section{Finiteness properties of $H^i_I(S_A)$} \label{finiteness}

 In this section, $\calF$ will be denoted to be the set of all facets of $\RR_{\geq 0}A$. 
 For a face $\tau$ of $\RR_{\geq 0}A$, we write $\NN (A \cap \tau):= \NN A \cap \RR \tau$ and denote $\ZZ(A \cap \tau)$ as the group generated by $\NN( A \cap \tau)$. 
 
 We recall some notations in \cite{Saito-Traves01}, which are crucial to the proofs of Theorems \ref{cyclic} and \ref{finite length}.\\
 For $a \in \ZZ^d$ and $\tau$ a face of $\RR_{\geq 0}A$, define
               $$E_{\tau}(a) := \{ l \in \CC (A \cap \tau) \mid a-l \in \NN A + \ZZ(A \cap \tau) \}/ \ZZ (A \cap \tau).$$
               Notice that 
               $$E_{\tau}(a) \subseteq [ \ZZ^d \cap \CC(A \cap \tau) ]/\ZZ (A \cap \tau) = \widetilde{\ZZ (A \cap \tau)}/ \ZZ(A \cap \tau).$$ 
               Here $\widetilde{\ZZ (A \cap \tau)} := \CC(A\cap \tau) \cap \ZZ^d$ is the saturation of $\ZZ(A\cap \tau)$, so 
               each $E_{\tau}(a)$ is a finite set.
               Consider  the ordering $\leq$ on $\ZZ^d$ defined by 
	       \[[a \leq b] \Longleftrightarrow [E_{\tau}(a) \subseteq E_{\tau}(b)] \text{ for all faces } \tau \text{ of } \RR_{\geq 0}A.\] 
	       This ordering induces an equivalence relation on 
               $\ZZ^d$ by 
	       \[[a \sim b] \Longleftrightarrow [a \leq b \text{ and } b\leq a].\] 
 Now, we are ready for the first main theorem.
 \begin{theorem} \label{cyclic} $S_A[f^{-1}] = D_A \cdot f^{-1}$ is cyclic as a left $D_A$-module for any monomial $f \in S_A$.
 \end{theorem}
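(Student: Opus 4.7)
The plan is to exploit the $\ZZ^d$-grading $D_A=\bigoplus_{a\in\ZZ^d}t^a\,\II(\Omega(a))$ and reduce the claim to producing, for each positive integer $N$, a single homogeneous differential operator sending $t^{-c}$ to a nonzero scalar multiple of $t^{-Nc}$. Writing $f=t^c$ with $c\in\NN A$, the first observation is that once $t^{-Nc}\in D_A\cdot t^{-c}$ for every $N\geq 1$, the containment $S_A\subseteq D_A$ forces $t^{b-Nc}=t^{b}\cdot t^{-Nc}\in D_A\cdot t^{-c}$ for every $b\in\NN A$ and every $N\geq 0$, and these monomials span $S_A[f^{-1}]$ as a $\KK$-vector space.

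For the induction step I would look, at each $N\geq 2$, for an operator of the form $P=t^{-c}\,p(\theta)$ in the graded piece $t^{-c}\,\II(\Omega(-c))$ of $D_A$ with $p\bigl(-(N-1)c\bigr)\neq 0$. Since $\theta_i\cdot t^u = u_i\,t^u$, this yields $P\cdot t^{-(N-1)c}=p\bigl(-(N-1)c\bigr)\,t^{-Nc}$, so combined with the inductive hypothesis $t^{-(N-1)c}\in D_A\cdot t^{-c}$ one gets $t^{-Nc}\in D_A\cdot t^{-c}$. The existence of such $p$ is equivalent to $-(N-1)c$ not lying in the Zariski closure $\overline{\Omega(-c)}\subseteq\KK^d$ of $\Omega(-c)=\NN A\setminus(c+\NN A)$.

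The geometric heart of the argument is therefore the claim that no positive integer multiple of $-c$ lies in $\overline{\Omega(-c)}$. In the normal case $\NN A=\widetilde{\NN A}$, an element $x\in\NN A\setminus(c+\NN A)$ is precisely one for which $\langle x,\nu_F\rangle<\langle c,\nu_F\rangle$ for some facet $F\in\calF$ with primitive inward normal $\nu_F$; hence $\Omega(-c)$ is a finite union of slabs $\{x\in\NN A:\langle x,\nu_F\rangle=k\}$ with $0\leq k<\langle c,\nu_F\rangle$, each Zariski dense in the corresponding affine hyperplane. Since $c\in\NN A$ gives $\langle c,\nu_F\rangle\geq 0$, we have $\langle -Mc,\nu_F\rangle\leq 0$ with equality only when $\langle c,\nu_F\rangle=0$, in which case no slab exists. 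Thus no $-Mc$ with $M\geq 1$ lies on any slab, and an explicit product of linear forms in the $\theta_i$'s, one factor $\langle\nu_F,\theta\rangle-k$ for each allowed pair $(F,k)$, provides the required non-vanishing polynomial.

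The main obstacle will be the non-normal case $\NN A\subsetneq\widetilde{\NN A}$, where $\Omega(-c)$ picks up further contributions of the form $c+h$ for holes $h\in\widetilde{\NN A}\setminus\NN A$; since the hole set can be infinite and align along facet-parallel hyperplanes, additional components could in principle appear in $\overline{\Omega(-c)}$. Here I would invoke the Saito--Traves description of $\II(\Omega(a))$ through the finite sets $E_\tau(a)$ indexed by faces $\tau$ of $\RR_{\geq 0}A$: these encode the hole-induced components face by face, and the structural point to verify is that every irreducible component of $\overline{\Omega(-c)}$ remains of the form $\{x:\langle x,\nu_F\rangle=k\}$ with $k\geq 0$, so that the sign comparison $\langle -Mc,\nu_F\rangle\leq 0$, together with a careful treatment of the boundary case $\langle c,\nu_F\rangle=0$, continues to exclude every $-Mc$ with $M\geq 1$. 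Establishing this compatibility between the hole structure and the facet directions is where I expect the combinatorial heavy lifting, and presumably where the referee's simplification intervenes.
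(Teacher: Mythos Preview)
Your reduction to $t^{-Nc}\in D_A\cdot t^{-c}$ and the slab argument in the normal case are correct. The paper takes a closely related but cleaner route: rather than analysing $\overline{\Omega(-c)}$ geometrically, it invokes Proposition~4.1.5(1) of \cite{Saito-Traves01} to reduce $t^{-mc}\in D_A\cdot t^{-c}$ to the combinatorial condition $-c\sim -mc$, and then verifies $E_\tau(-c)=E_\tau(-mc)$ for every face $\tau$ by a three-line case split. If $c\in\tau$ then $c\in\NN A\cap\tau=\NN(A\cap\tau)\subseteq\ZZ(A\cap\tau)$, and the definition of $E_\tau$ is insensitive to which multiple of $c$ one takes. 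If $c\notin\tau$ then some facet $\sigma\supseteq\tau$ has $F_\sigma(c)>0$, whence $F_\sigma(-c),F_\sigma(-mc)<0$ force $E_\sigma(-c)=E_\sigma(-mc)=\emptyset$ and hence $E_\tau(-c)=E_\tau(-mc)=\emptyset$. No induction on $N$ and no separate treatment of the normal versus non-normal case is needed; this is presumably the referee's simplification you anticipate.

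Your proposed path through the non-normal case has a concrete obstruction: the structural claim that every irreducible component of $\overline{\Omega(-c)}$ is a facet-parallel hyperplane $\{F_\sigma=k\}$ with $k\geq 0$ is false. Take $\NN A\subset\ZZ^2$ generated by $(2,0),(3,0),(0,1),(1,1)$ and $c=(2,0)$; the only hole is $(1,0)$, and one finds $\Omega(-c)=\{(0,0),(3,0)\}\cup\{(0,y),(1,y):y\geq 1\}$, so that $\overline{\Omega(-c)}=\{x=0\}\cup\{x=1\}\cup\{(3,0)\}$ has a zero-dimensional component. The desired conclusion $-Mc\notin\overline{\Omega(-c)}$ still holds here, but in general the components of $\overline{\Omega(-c)}$ are translates of $\CC(A\cap\tau)$ for \emph{all} faces $\tau$, not just facets, and excluding $-Mc$ from the lower-dimensional ones cannot be done by a sign check on facet forms alone. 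Controlling them face by face is exactly what the $E_\tau$ computation accomplishes, so completing your outline ends up collapsing into the paper's argument.
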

 \begin{proof} It's clear that $S_A[f^{-1}] \supseteq D_A \cdot f^{-1}$. Conversely, write $f=t^b$ for some $b \in \NN A$ (we may assume $b \neq 0$). 
 Since $S_A [f^{-1}] \subseteq D_A \cdot \{ t^{-mb} \mid m \in \NN \}$, it suffices to show that $t^{-mb} \in D_A \cdot t^{-b}$ for all $m \in \NN$.
 By Proposition 4.1.5(1) in \cite{Saito-Traves01}, it is enough to prove that $-b \sim -mb$. Indeed, we show that, for any face $\sigma$, $E_{\sigma}(-b)=E_{\sigma}(-mb)$ 
 as follows.\\
 (1) Let $\sigma$ be a facet, and suppose $b \notin \sigma$. Then $F_{\sigma}(-b) < 0 $, and hence $F_{\sigma}(-mb) < 0$. So $E_{\sigma}(-b)=E_{\sigma}(-mb) = \emptyset$.\\
 (2) Let $\tau$ be a face and suppose that $b \notin \tau$. Then there exists a facet $\sigma$ containing $\tau$ with $b \notin \sigma$.
     Hence $E_{\sigma}(-b)=E_{\sigma}(-mb) = \emptyset$ by (1), and thus $E_{\tau}(-b)=E_{\tau}(-mb) = \emptyset$.\\
 (3) Let $\tau$ be a face and suppose $b \in \tau$. Then $b \in \NN A \cap \tau = \NN(A \cap \tau)$. Hence, $\pm b \in \ZZ(A \cap \tau)$, 
     and thus $E_{\tau}(-b)=E_{\tau}(-mb)$ by definition.
 \end{proof}

 \begin{remark} \label{noeth} Via the \v Cech complex, it follows immediately from Theorem~\ref{cyclic} that $H^i_I(S_A)$ is finitely generated over $D_A$ if $D_A$ is left Noetherian.
 The left Noetherianess of $D_A$ was studied by Saito and Takahashi \cite{Saito-Takahashi09}. They proved that $D_A$ is 
 left Noetherian if $S_A$ satisfies Serre's condition ($S_2$). Serre's condition is, by \cite{Ishida}, equivalent to $$S_A = \bigcap_{\tau  \text {: facets}} \KK[\NN A +\ZZ(A \cap \tau)]. $$
 Saito and Takahashi also gave a necessary condition (on $S_A$) for $D_A$ to be left Noetherian. However, $D_A$ is not always left Noetherian.
 \end{remark}

 Nonetheless, we have
 \begin{theorem}\label{finite length} For any $i$ and any monomial ideal $I$, $H^i_I(S_A)$ is of finite length as a $D_A$-module.
 \end{theorem}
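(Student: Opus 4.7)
The \v Cech complex represents $H^i_I(S_A)$ as a subquotient of a finite direct sum of localizations $S_A[g^{-1}]$ at monomial elements $g$. Since finite length is preserved by finite direct sums and by passage to subquotients, it suffices to show each such $S_A[g^{-1}]$ has finite length as a $D_A$-module. Theorem~\ref{cyclic} already provides a single generator $g^{-1}$, so what remains is to bound the entire lattice of $D_A$-submodules of $S_A[g^{-1}]$.

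\textbf{Step 1: reduce to graded submodules.} I would first show every $D_A$-submodule $N \subseteq S_A[g^{-1}]$ is automatically $\ZZ^d$-graded. The Euler operators $\theta_i = t_i \del_i$ lie in the degree-zero component of $D_A$ (since $\Omega(0) = \emptyset$ forces $\II(\Omega(0)) = \KK[\theta_1,\dots,\theta_d]$) and they act on the degree-$a$ piece of the $\ZZ^d$-graded module $S_A[g^{-1}]$ by the scalar $a_i$. Given $m \in N$ with finite support $\{a^{(1)},\dots, a^{(r)}\} \subset \ZZ^d$, Lagrange interpolation over the infinite field $\KK$ produces $P_j \in \KK[\theta_1,\dots,\theta_d]$ with $P_j(a^{(k)}) = \delta_{jk}$, and then each graded component $P_j(\theta)\cdot m$ of $m$ lies in $N$.

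\textbf{Step 2: count graded submodules.} Each graded component of $S_A[g^{-1}]$ is at most one-dimensional (spanned by a single Laurent monomial), so a graded $D_A$-submodule is determined by its support. This support must be a down-set under the preorder $\leq$, because $t^b \in D_A\cdot t^a$ iff $b \leq a$ by \cite[Prop.~4.1.5(1)]{Saito-Traves01} (the same input used in Theorem~\ref{cyclic}). The preorder factors through $\sim$, and the quotient $\ZZ^d/{\sim}$ is finite: each profile $(E_\tau(a))_\tau$ lives in the finite product $\prod_{\tau} 2^{\widetilde{\ZZ(A\cap\tau)}/\ZZ(A\cap\tau)}$ indexed by the finitely many faces of $\RR_{\geq 0}A$. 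So the set of down-sets is finite, $S_A[g^{-1}]$ has only finitely many $D_A$-submodules, and therefore has finite length. Applying this through the \v Cech complex completes the proof.

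\textbf{Main obstacle.} The delicate point is Step 1: the ``every submodule is graded'' reduction hinges on having enough commuting, diagonally acting operators inside $D_A$, namely the $\theta_i$, to isolate graded components via polynomial interpolation. It works because $\theta_i \in D_A$ and the integer weights of the $\ZZ^d$-grading are pairwise distinct, but it is the conceptual heart of the argument, since a purely ring-theoretic approach would fail given that $D_A$ need not be Noetherian \cite{Saito-Takahashi09}. Everything else (cyclicity, finiteness of $\sim$-classes, one-dimensionality of graded pieces) is essentially already packaged in the preceding sections.
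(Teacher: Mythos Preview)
Your proof is correct, and it reaches the goal through machinery closely related to, but organized differently from, the paper's argument.

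The paper works once and for all inside $\KK[\ZZ^d]$: since every $S_A[f^{-1}]$ embeds in $\KK[\ZZ^d]$ as a $D_A$-submodule, it suffices to produce a composition series for $\KK[\ZZ^d]$ itself. The paper does this explicitly by listing the finitely many $\sim$-classes $[\alpha_1],\dots,[\alpha_k]$, stacking the cyclic modules $T_{\alpha_i}=\bigoplus_{b\ge\alpha_i}\KK t^b$, and invoking \cite[Thm.~4.1.6]{Saito-Traves01} for the simplicity of each successive quotient $\bigoplus_{b\in[\alpha_i]}\KK t^b$. Your route instead bounds the whole submodule lattice: Step~1 shows every $D_A$-submodule is graded (via the diagonal $\theta_i$-action and interpolation), and Step~2 identifies graded submodules with order-ideals of the finite poset $\ZZ^d/{\sim}$. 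This actually yields the stronger conclusion that $S_A[g^{-1}]$---indeed $\KK[\ZZ^d]$---has only \emph{finitely many} $D_A$-submodules, not merely finite length. The trade-off is that your argument is self-contained (no appeal to the simplicity theorem), whereas the paper's is shorter but outsources the work; your Step~1 is essentially what lies inside the proof of \cite[Thm.~4.1.6]{Saito-Traves01}, so you are unpacking what the paper cites as a black box.

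One notational slip: with the paper's convention $[a\le b]\Leftrightarrow[E_\tau(a)\subseteq E_\tau(b)\ \text{for all }\tau]$, one has $t^b\in D_A\cdot t^a$ iff $b\ge a$ (see the line ``$\bigoplus_{b\ge a}\KK t^b$ is generated by $t^a$'' in the proof), so supports of graded submodules are \emph{up}-sets rather than down-sets. This does not affect the mathematics.
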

 \begin{proof} In view of the \v Cech complex, since any localization $S_A[f^{-1}]$ (with monomial $f$) is a $D_A$-submodule of $\KK[\ZZ^d]$, 
               it suffices to show that $\KK[\ZZ^d]$ has a composition series.\\
               Consider the notations in the beginning of this section.
               For $a,b \in \ZZ^d$, we will write $a < b $ if $a \leq b$ but $a \nsim b$.            
               Then, for each $a \in \ZZ^d$, $ \bigoplus_{b\geq a} \KK t^b$ is generated by $t^a$ as a $D_A$-module.
               Moreover, $$ \frac{\bigoplus_{b\geq a}\KK t^b} {\bigoplus_{b > a} \KK t^b} \cong \bigoplus_{b \in [a]} \KK t^b$$
               is, by Theorem~4.1.6 in \cite{Saito-Traves01}, a simple $D_A$-module where $[a] = \{b\in \ZZ^d \mid b \sim a \}.$
               Since there are only finitely many faces and since each $E_{\tau}(a)$ is contained in $\widetilde{\ZZ (A \cap \tau)}/ \ZZ(A \cap \tau)$, 
               there are finitely many equivalence classes determined by $\sim$; we denote them $[\alpha_1],[\alpha_2],\dots,[\alpha_k]$.  
               We may rearrange the order so that, for any pair $i < j$, either $\alpha_i > \alpha_j$ or $\alpha_i$ and $\alpha_j$ are incomparable. 
               Denote $T_a :=\bigoplus_{b\geq a} \KK t^b$. Then the filtration
               $$ 0 \subsetneq T_{\alpha_1} \subsetneq \dots \subsetneq \Sigma_{l=1}^i T_{\alpha_l} \subsetneq \dots \subsetneq \Sigma_{l=1}^k T_{\alpha_l} = \KK[\ZZ^d]$$
               is a composition series of $D_A$-submodules of $\KK[\ZZ^d]$.
 \end{proof}
 
 \begin{example} 
 For $1$-dimensional $S_A$, the composition series of $\KK[\ZZ]$ is easy to describe.
 In this case, $\RR_{\geq 0} A$ has two faces, $0$ and $\sigma = \RR_{\geq 0} A$.
 For $a \in \ZZ$, 
 \[ E_0(a) = \{ \ell \in \{0\} \mid a - \ell \in \NN A\} / \{0 \} , \text{ and}\]
 \[ E_{\sigma}(a) = \{ \ell \in \ZZ \mid a - \ell \in \ZZ\}/ \ZZ.\]
 Thus $E_0(a) = \{0\} \text{ if } a \in \NN A$, $E_0(a) = \emptyset \text{ if } a \notin \NN A$, and $E_{\sigma}(a) = \{0\} \text{ for all }a \in \ZZ.$
 Therefore, $[0]$ and $[-1]$ are the two equivalence classes determined by $\sim$ and we have the composition series
 \[ 0 \subsetneq \KK[\NN A] = T_0 \subsetneq T_{-1}= \KK[\ZZ].\]
 \end{example} 
 
 \begin{remark} Theorem~\ref{cyclic} and Theorem~\ref{finite length} also hold for any field $\KK$ with characteristic $0$ by the isomorphism 
 $$ H^i_I(S_{A,\KK}) \otimes \overline{\KK} \cong H^i_I(S_{A,\overline{\KK}}).$$
 \end{remark}
  
 \begin{remark} \label{max}
 Suppose $I= \frakm$, the maximal graded ideal of $S_A$. Here we assume that the semigroup $\NN A$ is pointed, so that $0$ is the only invertible element.
 \begin{enumerate}
 \item Recall that $H^i_{\frakm}(S_A)$ can be computed as the $i$-th cohomology of the Ishida complex \cite{Ishida} or \cite{24h}.
       Therefore, $H^1_{\frakm}(S_A)$ is finitely generated as an $S_A$-module. 
       Indeed, it suffices to observe that
               $$ H^1_{\frakm}(S_A)  \twoheadleftarrow \bigcap_{\sigma \text {: rays}} \KK[\NN A + \ZZ(A\cap \sigma)]
               \subseteq \bigcap_{\tau \text {: facets}} \KK[\NN A + \ZZ(A\cap \tau)] \subseteq \widetilde{S_A} $$
               and the fact that $\widetilde{S_A}$ is finite over $S_A$. \\            
               Moreover, $H^d_{\frakm}(S_A)$ is cyclic as a left $D_A$-module.
               This is because the $d$-th module in the Ishida complex is $\KK[\ZZ^d]$ 
               which is cyclic by Theorem~\ref{cyclic}.

 \item In general, Sch\"afer and Schenzel \cite{Schafer-Schenzel90} showed that there is a partition of $\ZZ^d$ with respect to 
 which $H^i_{\frakm}(S_A)$ can be written as a finite direct sum of $\KK$-vector spaces. This decomposition coincides with the sector partition 
 appearing in \cite{Matusevich-Miller} (see also \cite{HM05} for a more general notion of sector partition).
 More precisely, let $\conv(A)$ be the set of all faces of $\RR_{\geq 0} A$ and for any filter (cocomplex) $\nabla$ of $\conv(A)$, denote 
 $$P_{\nabla} = \bigcap_{A \cap \tau \in \nabla} [\NN A + \ZZ(A \cap \tau)] \setminus
 \bigcup_{A \cap \tau \notin \nabla} [\NN A + \ZZ(A \cap \tau)].$$
 Then the $P_{\nabla}$'s form a partition (sector partition) of $\ZZ^d$ and
 $$ H^i_{\frakm}(S_A) = \bigoplus_{\nabla} \KK[P_{\nabla}] \otimes_{\KK} H^i(\conv(A),\conv(A) \setminus \nabla; \KK).$$
 On the other hand, for $a \in \ZZ^d$ denote 
   $$\nabla(a) := \{ \text{ face }\tau \text{ of }\RR_{\geq0}A \mid a \in \NN A +\ZZ(A \cap \tau) \}$$ 
   and consider the equivalence
 relation $a \equiv a' \Leftrightarrow \nabla(a)=\nabla(a')$. Then $P_{\nabla(a)}$ is the equivalence class containing $a$. Notice that $P_{\nabla}$ could be empty and 
 that $[a \in P_{\nabla}] \Leftrightarrow [P_{\nabla}= P_{\nabla(a)}]$.
   
 Theorem 6 in \cite{Matusevich-Miller} shows that the partition determined by the equivalence relation $\sim$ in the proof of Theorem~\ref{finite length} is finer than the sector partition
 determined by $\equiv$. 
 \end{enumerate}
 \end{remark}
 
 Notice that each $\KK[P_{\nabla}]$ is naturally a left $\ZZ^d$-graded $D_A$-module, because each $\KK [ \NN A + \ZZ(A\cap \tau)]$ is.
 If $S_A$ is normal, the $D_A$-module $\KK[P_{\nabla}]$ is simple. In fact, we have
 
 \begin{theorem} \label{sector-normal} If $S_A$ is normal and $I$ is a monomial ideal in $S_A$, then every simple subquotient of $H^i_I(S_A)$ is of the form $\KK[P_{\nabla}]$ coming from the sector partition.
 \end{theorem}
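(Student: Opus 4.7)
The plan is to reduce the theorem to showing that, when $S_A$ is normal, the equivalence relation $\sim$ from the proof of Theorem~\ref{finite length} coincides with the sector partition relation $\equiv$ of Remark~\ref{max}(2). Granting this, the simple composition factors $\bigoplus_{b \in [a]} \KK t^b$ of $\KK[\ZZ^d]$ identified in that proof are exactly the modules $\KK[P_{\nabla(a)}]$. Since every $S_A[f^{-1}]$ embeds as a $D_A$-submodule of $\KK[\ZZ^d]$, the \v Cech complex realizes $H^i_I(S_A)$ as a subquotient of a finite direct sum of copies of $\KK[\ZZ^d]$; Jordan--H\"older, applied using the finite length provided by Theorem~\ref{finite length}, then forces every simple subquotient of $H^i_I(S_A)$ to be of the form $\KK[P_\nabla]$.

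The key combinatorial lemma I would prove is that normality of $S_A$ implies $\ZZ(A \cap \tau) = \widetilde{\ZZ(A \cap \tau)}$ for every face $\tau$ of $\RR_{\geq 0}A$. Indeed, $\NN A = \RR_{\geq 0}A \cap \ZZ^d$ yields $\NN(A \cap \tau) = \NN A \cap \RR\tau = \tau \cap \ZZ^d$. Choosing a lattice point $w$ in the relative interior of $\tau$, for any $v \in \RR\tau \cap \ZZ^d$ and $N$ sufficiently large both $v + Nw$ and $Nw$ lie in $\tau \cap \ZZ^d$, so $v = (v+Nw) - Nw$ belongs to the group generated by $\tau \cap \ZZ^d$. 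This shows $\ZZ(A \cap \tau) = \RR\tau \cap \ZZ^d = \widetilde{\ZZ(A \cap \tau)}$.

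With this lemma in hand, $\widetilde{\ZZ(A \cap \tau)}/\ZZ(A \cap \tau)$ is trivial, so $E_\tau(a) \subseteq \{0\}$ for every face $\tau$ and every $a \in \ZZ^d$. Unpacking the definition with $\ell = 0$, we see that $E_\tau(a) = \{0\}$ iff $a \in \NN A + \ZZ(A \cap \tau)$, i.e.\ iff $\tau \in \nabla(a)$, while $E_\tau(a) = \emptyset$ otherwise. Consequently $a \sim b$ iff $E_\tau(a) = E_\tau(b)$ for every $\tau$ iff $\nabla(a) = \nabla(b)$ iff $a \equiv b$, so the two partitions coincide and $[a] = P_{\nabla(a)}$, completing the argument.

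The main obstacle is the combinatorial identity $\ZZ(A \cap \tau) = \widetilde{\ZZ(A \cap \tau)}$ under normality; everything else is a formal unpacking of the definitions of $E_\tau$ and $\nabla$ combined with the subquotient observation and Theorem~\ref{finite length}. One small subtlety to be careful with is the inclusion $H^i_I(S_A) \subseteq \bigoplus \KK[\ZZ^d]$ in the subquotient sense only (taking kernels/cokernels of \v Cech differentials), which is why one must invoke Jordan--H\"older rather than pass directly to submodules.
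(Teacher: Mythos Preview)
Your proposal is correct and follows essentially the same route as the paper: both reduce to showing that $\sim$ and $\equiv$ coincide on $\ZZ^d$ under normality, and both deduce this from the fact that $E_\tau(a)\in\{\emptyset,\{0\}\}$ with $E_\tau(a)=\{0\}$ precisely when $a\in\NN A+\ZZ(A\cap\tau)$. You simply make explicit two points the paper leaves implicit---the saturation identity $\ZZ(A\cap\tau)=\widetilde{\ZZ(A\cap\tau)}$ and the Jordan--H\"older passage from composition factors of $\KK[\ZZ^d]$ to simple subquotients of $H^i_I(S_A)$.
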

 \begin{proof} By Theorem~\ref{finite length} and Remark~\ref{max}(2), we only have to show that $\sim$ and $\equiv$ define the same equivalence relation on $\ZZ^d$.
 Note that the normality of $S_A$ implies that 
  \[ \begin{aligned}
      E_{\tau}(a) &= \{0\} \text{, if } a \in \NN A + \ZZ(A \cap \tau), \text{ and}\\
      E_{\tau}(a) &= \emptyset \text{ if } a \notin \NN A + \ZZ(A \cap \tau).
      \end{aligned}
      \] 
 So we have
 $$\begin{aligned}
                      & a \sim b  \\
    \Leftrightarrow   & E_{\tau}(a) = E_{\tau}(b) \text{ for all } \tau \\
    \Leftrightarrow   & E_{\tau}(a) = \{0 \} \text{ if and only if } E_{\tau}(b) = \{0 \} \\
    \Leftrightarrow   & a \in \NN A + \ZZ(A \cap \tau) \text{ if and only if } b \in \NN A + \ZZ(A \cap \tau) \\
    \Leftrightarrow   & a \equiv b. 
 \end{aligned}$$
 Therefore, the simple subquotients of $\KK[\ZZ^d]$ are precisely the $D_A$-modules
 $$\KK[P_{\nabla}] = \bigoplus_{b \in [a]} \KK t^b \cong \frac{\bigoplus_{b\geq a} \KK t^b}{\bigoplus_{b > a} \KK t^b}.$$
 \end{proof}

 \begin{example}\label{2dim} Consider $A= \bigl( \begin{smallmatrix} 1&1&1 \\ 0&1&2 \end{smallmatrix} \bigr)$. Then $S_A = \KK[t,ts,ts^2]$ is a $2$-dimensional normal toric algebra.
 Let $I=(ts)$ be the ideal of $S_A$ generated by $ts$. We shall describe a composition series of $H^1_I(S_A)$. By \v Cech complex, $H^1_I(S_A) = \KK[\ZZ^d] / S_A$.
 Following the notations in the proof of Theorem~\ref{finite length} and Remark~\ref{max},
  let $$ \begin{aligned} &\nabla_0 = \{ 0, \sigma_1, \sigma_2, \RR_{\geq0}A \} \text{, } \nabla_1 = \{ \sigma_1, \RR_{\geq 0} A \}  
 \text{, }  \nabla_2 = \{ \sigma_2, \RR_{\geq 0} A \},  \\
       &\nabla_{12} = \{ \sigma_1, \sigma_2,  \RR_{\geq 0} A \} \text{, } \nabla_{A} = \{ \RR_{\geq 0} A \} \end{aligned},$$ where 
       $\sigma_1 = \RR_{\geq 0} \bigl(  \begin{smallmatrix} 1 \\ 0 \end{smallmatrix} \bigr)$
       and $\sigma_2 = \RR_{\geq 0} \bigl(  \begin{smallmatrix} 1 \\ 2 \end{smallmatrix} \bigr).$
 Then $$ \begin{aligned} 
        &P_{\nabla_0} = P_{\nabla(a_0)} = \NN A ,\\
	&P_{\nabla_1} = P_{\nabla(-a_1)} = [\NN A + \ZZ(A \cap \sigma_1)] \setminus [\NN A + \ZZ(A \cap \sigma_2)], \\
         &P_{\nabla_2} = P_{\nabla(-a_2)}= [\NN A + \ZZ(A \cap \sigma_2)] \setminus [\NN A + \ZZ(A \cap \sigma_1)], \\
         &P_{\nabla_{12}} = \emptyset \text{, and }\\
	 &P_{\nabla_A} =P_{\nabla(-a_3)}= \ZZ^2 \setminus  \bigl[(\NN A + \ZZ(A \cap \sigma_1)) \cup (\NN A + \ZZ(A \cap \sigma_2))\bigr]
	 \end{aligned}
	 $$ 
	 where $a_0 =  \bigl(  \begin{smallmatrix} 0 \\ 0 \end{smallmatrix} \bigr)$ and $a_i$, $i=1,2,3$, is the $i$th column of $A$.
 In terms of notation in Theorem~\ref{finite length},
      $$ \begin{aligned}
         T_{a_0} &= \KK[P_{\nabla_0}]= S_A \\
	 T_{-a_1} &= \KK[P_{\nabla_0}] \oplus \KK[ P_{\nabla_1}]\\
	 T_{-a_2} &= \KK[P_{\nabla_0}] \oplus \KK[ P_{\nabla_2}]\\
	 T_{-a_3} &= \KK[\ZZ^2] = \KK[P_{\nabla_0}] \oplus \KK[P_{\nabla_1}] \oplus \KK[ P_{\nabla_2}] \oplus \KK[P_{\nabla_A}]
	 \end{aligned}
	 $$
  So $ 0 \subset T_{a_0} \subset T_{-a_1} \subset T_{-a_1}+T_{-a_2} \subset T_{-a_3}= \KK[\ZZ^2]$ is a composition series of $\KK[\ZZ^2]$.
  Quotienting out $S_A$, we obtain a composition series of $H_I^1(S_A)$:
   $$ 0 \subset \KK[ P_{\nabla_1}]  \subset \KK[P_{\nabla_1}] \oplus \KK[ P_{\nabla_2}] 
     \subset \KK[P_{\nabla_1}] \oplus \KK[ P_{\nabla_2}] \oplus \KK[P_{\nabla_A}] .$$
 \end{example}
 
 \begin{example} \label{hartshorne} This example is essentially due to Hartshorne \cite{Ha}. 
 We adopt its combinatorial description which can be found in \cite{24h} or \cite{MS}.
 
 Consider $A= \Bigl( \begin{smallmatrix} 1&1&1&1 \\ 0&1&0&1 \\ 0&0&1&1 \end{smallmatrix} \Bigr)$. Then $S_A = \KK[r,rs,rt,rst]$ is a normal toric algebra. 
 Consider the ideal $I=(r,rs)$ of $S_A$. Then the socle Hom$_{S_A}(S_A/ \frakm , H^2_I(S_A))$ is infinite dimensional where $\frakm=(r,rs,rt,rst)$ is the maximal graded 
 ideal of $S_A$. 
 However, according to Theorem~\ref{finite length} $H^2_I(S_A)$ is of finite length over $D_A$. 
 
 In fact, $H^2_I(S_A)$ is $D_A$-simple. To see this, using the notation in Remark~\ref{max} we consider the filter $\nabla = \{ \sigma_{12} , \RR_{\geq0} A \}$ where 
 $\sigma_{12} =\RR_{\geq 0 } \Bigl( \begin{smallmatrix} 1&1\\ 0&1 \\ 0&0 \end{smallmatrix} \Bigr) $ is 
 a facet of $\RR_{\geq 0} A$. 
 We claim that $$P_{\nabla} = [\NN A + \ZZ(A\cap \sigma_{12})] \setminus [(\NN A + \ZZ a_1)\cup(\NN A + \ZZ a_2)]$$
 where $a_1= \Bigl( \begin{smallmatrix} 1\\ 0 \\ 0 \end{smallmatrix} \Bigr)$ and $a_2= \Bigl( \begin{smallmatrix} 1\\ 1 \\ 0 \end{smallmatrix} \Bigr)$.
 Therefore, in view of the \v Cech complex we have the isomorphism $$H^2_I(S_A) \cong \KK[P_{\nabla}]$$ which is $D_A$-simple by Theorem~\ref{sector-normal}.
 
 The claim is equivalent to the equality
 $$ \begin{aligned}  &\bigl[\NN A + \ZZ(A\cap \sigma_{12}) \bigr] \cap \Bigl[\bigcup_{\sigma = \sigma_{13},\sigma_{24},\sigma_{34}}(\NN A + \ZZ(A\cap \sigma))\Bigr] \\
     = & \bigl[\NN A + \ZZ(A\cap \sigma_{12}) \bigr] \cap \bigl[(\NN A + \ZZ a_1)\cup(\NN A + \ZZ a_2) \bigr]
     \end{aligned}$$
 where $\sigma_{13} =\RR_{\geq 0 } \Bigl( \begin{smallmatrix} 1&1\\ 0&0 \\ 0&1 \end{smallmatrix} \Bigr) $,
       $\sigma_{24} =\RR_{\geq 0 } \Bigl( \begin{smallmatrix} 1&1\\ 1&1 \\ 0&1 \end{smallmatrix} \Bigr) $, and
       $\sigma_{34} =\RR_{\geq 0 } \Bigl( \begin{smallmatrix} 1&1\\ 0&1 \\ 1&1 \end{smallmatrix} \Bigr) $.
 This equality can be verified by the following data:
 $$\begin{aligned}
   &\NN A+\ZZ a_1 = \{^t(x,y,z) \in \ZZ^3 \mid y\geq 0 \text{ and} z\geq 0 \}, \\ 
   &\NN A+\ZZ a_2 = \{^t(x,y,z) \in \ZZ^3 \mid x \geq y \text{ and} z\geq 0 \}, \\
   &\NN A+\ZZ (A \cap \sigma_{12})  = \{^t(x,y,z) \in \ZZ^3 \mid z \geq 0 \}, \\
   &\NN A+\ZZ (A \cap \sigma_{13})  = \{^t(x,y,z) \in \ZZ^3 \mid y \geq 0 \},\\
   &\NN A+\ZZ (A \cap \sigma_{24})  = \{^t(x,y,z) \in \ZZ^3 \mid x \geq y \}, \\
   &\NN A+\ZZ (A \cap \sigma_{34})  = \{^t(x,y,z) \in \ZZ^3 \mid x \geq z \}.
   \end{aligned}
   $$ 
  \end{example}
 
 \begin{remark} Helm and Miller \cite{HM03} studied the Bass numbers of local cohomology modules over toric algebras. 
 As a generalization of Hartshorne's example, their main result (\cite{HM03}, Theorem~7.1) implies that for a Gorenstein normal toric algebra $S_A$, 
 $\NN A$ is not simplicial if and only if 
 there exists a $\NN A$-graded prime $\frakp$ of dimension $2$ such that $H^{d-1}_{\frakp}(S_A)$ has infinite dimensional scole.
 \end{remark}
 
\section{Associated graded rings $\gr D_A$ and characteristic cycles} \label{charcycle}

 \subsection{Associated graded rings $\gr D_A$} \label{subsec-gr}
 
 Let $R$ be a $\KK$-algebra as in subsection~\ref{diff}. 
 The definition of $D(R;\KK)$ gives a order filtration of $D(R;\KK)$:
 $$ 0 \subseteq R =D_0(R;\KK) \subseteq D_1(R;\KK) \subseteq D_2(R;\KK) \subseteq \dots .$$
 Define the associated graded ring of $D(R;\KK)$ to be:
 $$ \gr D(R;\KK) := D_0 \oplus (D_1/D_0) \oplus (D_2/D_1) \oplus \dots $$
 where $D_i:= D_i(R;\KK)$. 
 From the definition of $D(R;\KK)$, $\gr D(R;\KK)$ is a commutative $R$-algebra and we have the natural embedding $R \hookrightarrow \gr D(R;\KK)$.
 For example, if $R = \KK[t_1,\dots,t_d]$ is a polynomial algebra over $\KK$, then $D(R;\KK) = \KK[t_1,\dots,t_d] \langle \del_1,\dots, \del_d \rangle$ is the Weyl algebra.
 The associated graded ring $\gr D(R;\KK)= \KK [ t_1,\dots,t_d, \xi_1,\dots, \xi_d ]$ is a $2d$-dimensional polynomial algebra over $\KK$ where
 $\xi_i$ is the image of $\del_i$ in the associated graded ring $\gr D(R;\KK)$.
 In what follows, we will use the description: $$\gr D(\KK[t_1^{\pm 1},\dots,t_d^{\pm 1}];\KK) = \KK[ t_1^{\pm 1},\dots,t_d^{\pm 1}, \Theta_1,\dots, \Theta_d ]$$ where 
 $\Theta_i = t_i \xi_i$ is the image of
 $\theta_i=t_i \del_i$ in the associated graded ring $\gr D(R;\KK)$.

 When $R$ is a regular algebra over $\KK$, $\Spec(\gr D(R;\KK))$ can be identified as the cotangent bundle of the variety $\Spec R$ with the projection 
 $$\pi : \Spec(\gr D(R;\KK)) \rightarrow \Spec R$$ induced by the embedding $R \hookrightarrow \gr D(R;\KK)$. The fiber of $\pi$ over a closed point of $\Spec R$ is 
 the cotangent space over that point which is isomorphic to the affine space $\KK^{\dim R}$.
 
 In this subsection, we discuss the the map $\pi$ for a certain class of toric algebras. We shall see that in some cases the fibers of $\pi$ behave nicely
 (Theorems~\ref{fiber}, \ref{fiber2}). We also give an example (Example~\ref{badfiber}) of more complicated nature.
 
 To begin with, consider the natural order filtration of $D_A$ inherited from that of $D(\KK[\ZZ^d])$. With respect to this filtration, one can regard gr$D_A$ as a commutative
 subalgebra of $\text{gr} D(\KK[\ZZ^d]) = \KK[t_1^{\pm 1},\dots, t_d^{\pm 1}, \Theta_1,\dots, \Theta_d ].$
 When $\gr D_A$ is finitely generated over $\KK$, Musson showed that it has dimension $2d$ \cite{Mu87}. 
 Saito and Traves proved that $\gr D_A$ is finitely generated over $\KK$ if and only if the semigroup $\NN A$ is scored \cite{Saito-Traves04}. 
 By definition, a semigroup $\NN A$ is scored if $$ \NN A = \bigcap_{\sigma \text{: facet}} \{a \in \ZZ^d \mid F_{\sigma}(a) \in F_{\sigma}(\NN A) \},$$
 or equivalently, $\widetilde{\NN A} \setminus \NN A$ is a union of finitely many hyperplane sections parallel to some facets of $\RR_{\geq 0} A$.
 The scored condition implies Serre's condition ($S_2$) (see Remark~\ref{noeth}).
 
 We should remark that if $S_A$ is normal, then $\gr D_A$ is Gorenstein (\cite{Mu87} Theorem D). In general, even for $1$-dimensional semigroup ring (which is always scored), the associated graded ring can have bad singularities. In fact, we have the following
 \begin{prop} \label{notCM}
 If $S_A$ is a $1$-dimensional toric algebra which is not normal, then $\gr D_A$ is not Cohen-Macaulay.
  \end{prop}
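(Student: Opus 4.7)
The plan is to localize at the maximal ideal $\mathfrak{M} \subset \gr D_A$ corresponding to the unique torus-fixed point and to produce an explicit homogeneous system of parameters at $\mathfrak{M}$ that fails to be a regular sequence. Since $\NN A$ is automatically scored in dimension one, $\gr D_A$ is a finitely generated $2$-dimensional $\KK$-domain by \cite{Saito-Traves04}, so Cohen-Macaulayness of $\gr D_A$ would force every system of parameters at $\mathfrak{M}$ to form a regular sequence in the localization $(\gr D_A)_{\mathfrak{M}}$.

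Let $e \geq 2$ denote the multiplicity of $\NN A$ (its smallest positive element) and let $F = \max(\NN \setminus \NN A)$ be the Frobenius number, which exists because $\NN A$ is non-normal. I would take as candidate parameters
\[
f_1 := t^e, \qquad f_2 := \Theta,
\]
and work with the element $a := F + e$, which lies in $\NN A$ since all integers exceeding $F$ do. The key combinatorial observation is that $\Omega(F) = \{0\}$: indeed $0 + F = F \in H := \NN \setminus \NN A$, while $n + F > F$ forces $n + F \in \NN A$ for every positive $n \in \NN A$. Hence $\II(\Omega(F)) = (\Theta)$, so $t^F \Theta$ is a genuine element of $\gr D_A$, and $\Theta \cdot t^a = t^e \cdot (t^F \Theta) \in (t^e)$. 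Conversely, if $t^a$ lay in $(t^e)$ inside $\gr D_A$, comparing the degree-$a$ pieces would produce some $p(\Theta) \in \II(\Omega(F)) = (\Theta)$ with $p(\Theta) = 1$, which is impossible.

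To promote this to the local ring at $\mathfrak{M}$ I would verify two further points. First, $\{f_1, f_2\}$ really is a system of parameters: since $\gr D_A$ is a $2$-dimensional domain it suffices to check that $\Theta \notin \sqrt{(t^e)}$, and the degree-zero component of $(t^e)$ equals $\II(\Omega(-e)) \subset \KK[\Theta]$; because $\Omega(-e) \supseteq \{0, F+e\}$ (take $n = F+e \in \NN A$, for which $n-e = F \in H$), no power $\Theta^n$ lies in this ideal. Second, the non-containment $t^a \notin (t^e)$ must persist after localization at $\mathfrak{M}$: for any $s = c + s'$ with $c \in \KK^\times$ and $s' \in \mathfrak{M}$, tracking the $t^{a-e}$-coefficient of $s \cdot t^{a-e}$ shows its value at $\Theta = 0$ equals $c \neq 0$, whereas membership in $\gr D_A$ would force that coefficient to lie in $(\Theta) = \II(\Omega(F))$ and hence vanish at $\Theta = 0$.

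Putting this together, $\{t^e, \Theta\}$ is a system of parameters at $\mathfrak{M}$ that is not a regular sequence, contradicting the Cohen-Macaulay property of $(\gr D_A)_{\mathfrak{M}}$ and hence of $\gr D_A$. I expect the main obstacle to be the last technical step, namely verifying that the non-containment $t^a \notin (t^e)$ survives localization: this amounts to showing that the monomial $t^F$ cannot be manufactured inside $\gr D_A$ even after inverting elements not in $\mathfrak{M}$, and requires a careful degree-by-degree analysis inside the ambient ring $\gr D(\KK[\ZZ]) = \KK[t^{\pm 1}, \Theta]$.
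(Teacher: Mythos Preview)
Your argument has a genuine gap in the verification that $\{t^e,\Theta\}$ is a system of parameters. You claim that the degree-zero component of the ideal $(t^e)\subset\gr D_A$ equals $\II(\Omega(-e))$, but this is the degree-zero piece of $t^e D_A$ inside $D_A$, not inside $\gr D_A$. By Lemma~\ref{gr} (specialized to dimension one), the graded piece $(\gr D_A)_{-e}$ is $t^{-e}\,\KK[\Theta]\cdot\Theta^{|\Omega(-e)|}$, not $t^{-e}\II(\Omega(-e))$: passing to the associated graded ring replaces the vanishing ideal of $\Omega(-e)$ by the principal monomial ideal generated by $\Theta^{|\Omega(-e)|}$. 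Hence
\[
(t^e)_0 \;=\; t^e\cdot(\gr D_A)_{-e}\;=\;\bigl(\Theta^{|\Omega(-e)|}\bigr)\subset\KK[\Theta],
\]
which visibly contains a power of $\Theta$. Thus $\Theta\in\sqrt{(t^e)}$, contrary to what you assert.

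This is fatal for the strategy: since $\sqrt{(t^e,\Theta)}=\sqrt{(t^e)}$ has height~$1$ (for instance $\xi^e=t^{-e}\Theta^e$ does not lie in it, by the computation $|\Omega(-(k+1)e)|=(k+1)e>ke$ for $k\gg 0$), the pair $\{t^e,\Theta\}$ is \emph{not} a system of parameters at $\mathfrak{M}$. The relation you found, $t^a\cdot\Theta\in(t^e)$ with $t^a\notin(t^e)$, is then no obstruction to Cohen--Macaulayness: $\Theta$ is nilpotent modulo $(t^e)$, so it is automatically a zero-divisor there regardless of whether $\gr D_A$ is Cohen--Macaulay.

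For comparison, the paper avoids parameters entirely. It identifies $\gr D_A$ as a two-dimensional semigroup subring of $\KK[t,\xi]$, proves $\dim_\KK\bigl(\KK[t,\xi]/\gr D_A\bigr)<\infty$, and then applies Ishida's criterion (Remark~\ref{noeth}) to conclude that $\gr D_A$ fails Serre's condition $(S_2)$. If you want to rescue a parameter-based approach you would need a genuine system of parameters such as $\{t^e,\xi^e\}$ together with a new argument that it is not regular; the element $\Theta$ cannot serve as the second parameter.
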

  \begin{proof}
  Using the formula in Lemma~\ref{gr}, we see that $\gr D_A$ is again a toric algebra over $\KK$. 
  Indeed, notice that $0$ is the only facet and that $n_{0,w}= \lvert \Omega(w) \rvert $. Note also that $\lvert \Omega(-w) \rvert = w +\lvert \Omega(w) \rvert$ by
  Lemma~\ref{fiber of gr}. So by Lemma~\ref{gr},  
  \[ \gr D_A = \KK \left[ t \xi, t^{\mid \Omega(w) \mid} \xi^{\mid \Omega(-w) \mid} : \lvert w \rvert \in \{a_1,\dots,a_n \} \cup \Hole (\NN A) \right]. \]
  Therefore $\gr D_A$ is a two dimensional toric algebra over $\KK$.
  
  We claim that \begin{equation}\label{finitecodim} \text{dim}_{\KK} \frac{\KK[t, \xi]}{\gr D_A} < \infty. \end{equation}
  Take $\ell$ to be the maximal number of $2 \lvert \Omega(-w) \rvert$ for $w \in \Hole (\NN A)$.
  To prove \eqref{finitecodim}, it is enough to show that $t^u \xi^v \in \gr D_A$ for all pairs $u,v \in \NN$ satisfying
  $u+v \geq \ell$. 
  Since $[t^u \xi^v \in \gr D_A \Leftrightarrow t^v \xi^u \in \gr D_A]$, by symmetry we may assume $w_0:=u-v \geq 0$.
  \begin{itemize}
  \item If $w_0 \in \Hole(\NN A)$, 
   $ 2u \geq u+v \geq \ell \geq 2 \lvert \Omega(-w_0) \rvert.$
   So $(t \xi)^{u-\lvert \Omega(-w_0) \rvert} \in \gr D_A$, and hence
  $ t^u \xi^v = (t \xi)^{u-\lvert \Omega(-w_0) \rvert} t^{\lvert \Omega(-w_0) \rvert} \xi^{\lvert \Omega(w_0) \rvert} \in \gr D_A.$
  
  \item If $w_0 \in \NN A$ we have $t^u \xi^v =(t \xi )^v t^{w_0} = (t \xi )^v t^{\lvert \Omega(-w_0) \rvert}  \in \gr D_A.$ 
  \end{itemize}
  So the claim is proved. Now, applying the criterion in Remark~\ref{noeth} to the toric algebra $\gr D_A$,
  we see that $\gr D_A$ doesn't satisfy  Serre's condition ($S_2$). Hence, $\gr D_A$ is not Cohen-Macaulay.
  \end{proof}
  
  \begin{remark} The claim \eqref{finitecodim} holds true for any affine curve with injective normalization (see the proof of Theorem 3.12 in \cite{SS88}). 
  In fact, this codimension is known to be the Letzter--Makar-Limanov invariant, which plays an important role in the theory of Calogero-Moser space. For more information, see the work by Berest and Wilson \cite{BW}.
  \end{remark}
  
  Now, let $\frakm$ be the maximal graded ideal of $S_A$ corresponding to the closed point $0$ of the toric variety $\Spec(S_A)$.
  Let $I = \sqrt{\frakm \text{gr}D_A}$ be the radical of the extended ideal of $\frak m$ under the embedding $S_A \hookrightarrow \text{gr}D_A$.
  We are going to show that if $\NN A$ is simplicial and scored then $\text{gr}D_A / I $ is isomorphic to $S_A$ as $\KK$-algebras. This implies that the reduced induced structure
  of the fiber of $\pi : \Spec(\text{gr}D_A) \rightarrow \Spec(S_A)$ over
  the point $0$ is isomorphic to the ambient toric variety. 
 
  The following two lemmas are needed:
  
  \begin{lemma}\label{gr}\cite{Saito-Traves04} 
             For scored $\NN A$,
	     $$\gr D_A = \bigoplus_{a \in \ZZ^d } t^a \KK[\Theta_1,\dots,\Theta_d]\cdot P_a \text{ where} $$
             $$P_a = \displaystyle{\prod_{\sigma \in \calF}} F_{\sigma}(\Theta)^{n_{\sigma,a}} \text{ and } n_{\sigma,a} = \# \{ F_{\sigma}(
             \NN A) \setminus [-F_{\sigma}(a) + F_{\sigma}( \NN A)] \}. $$
 \end{lemma}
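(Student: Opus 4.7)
The plan is to work directly from the explicit description $D_A = \bigoplus_{a \in \ZZ^d} t^a \II(\Omega(a))$ recalled in Section~\ref{pre} and to compute the associated graded piece by piece. Observe first that the order filtration on $D_A$ respects the $\ZZ^d$-grading: an element of the form $t^a q(\theta_1,\dots,\theta_d)$ has order $\deg q$, since each $\theta_i = t_i \del_i$ has order one. Consequently
$$\gr D_A = \bigoplus_{a\in \ZZ^d} t^a \ini(\II(\Omega(a))),$$
where $\ini$ denotes the ideal of leading forms in $\KK[\Theta_1,\dots,\Theta_d]$. The lemma thus reduces to the identity $\ini(\II(\Omega(a))) = \KK[\Theta_1,\dots,\Theta_d]\cdot P_a$ for each $a$.

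Next I would unpack $\Omega(a)$ using the scored hypothesis. The characterization $\NN A = \bigcap_{\sigma \in \calF} \{b \in \ZZ^d : F_\sigma(b) \in F_\sigma(\NN A)\}$ shows that $b \in \NN A$ fails to lie in $-a + \NN A$ precisely when $F_\sigma(b) \in F_\sigma(\NN A) \setminus (-F_\sigma(a) + F_\sigma(\NN A))$ for some facet $\sigma$. Writing $E_\sigma(a)$ for this finite set of values (of cardinality $n_{\sigma,a}$), one obtains the slab decomposition
$$\Omega(a) = \bigcup_{\sigma \in \calF} \bigcup_{c \in E_\sigma(a)} \{b \in \NN A : F_\sigma(b) = c\}.$$

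The heart of the argument is to show that $\II(\Omega(a))$ is the principal ideal generated by $p_a(\theta) := \prod_{\sigma} \prod_{c \in E_\sigma(a)} (F_\sigma(\theta) - c)$. Membership $p_a \in \II(\Omega(a))$ is immediate from the slab decomposition. The reverse inclusion is the main obstacle: it requires that each slab $\{b \in \NN A : F_\sigma(b) = c\}$ be Zariski-dense in the affine hyperplane $V(F_\sigma(\theta) - c) \subset \CC^d$. Using the scored hypothesis together with $\ZZ A = \ZZ^d$, all lattice points on this hyperplane whose other facet values $F_{\sigma'}(b)$ are sufficiently large automatically lie in $\NN A$ (only finitely many ``holes'' obstruct membership), and such a shifted sub-cone of a full-rank sublattice is Zariski-dense in the ambient hyperplane. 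Hence any polynomial vanishing on the slab lies in the prime $(F_\sigma(\theta)-c)$, so $\II(\Omega(a)) = \bigcap_{\sigma,c}(F_\sigma(\theta)-c)$; because the linear factors $F_\sigma(\theta)-c$ are pairwise non-associate irreducibles, this intersection collapses to the principal ideal $(p_a)$.

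Finally, in the polynomial domain $\KK[\theta_1,\dots,\theta_d]$ the initial-form map is multiplicative, so the initial ideal of a principal ideal is itself principal, generated by the initial form of the generator. Since $\ini(p_a) = \prod_\sigma F_\sigma(\Theta)^{n_{\sigma,a}} = P_a$, one obtains $\ini(\II(\Omega(a))) = \KK[\Theta_1,\dots,\Theta_d]\cdot P_a$. Substituting into the decomposition of $\gr D_A$ above gives the lemma. The subtle step is the Zariski-density claim for slabs, which is where the scored hypothesis is genuinely used; the remaining manipulations are standard ideal-theoretic bookkeeping.
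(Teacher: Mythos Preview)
The paper does not prove this lemma at all; it is simply quoted from \cite{Saito-Traves04}. Your argument is a correct self-contained proof and follows the same line as the original: reduce to computing $\ini(\II(\Omega(a)))$ for each $a$, use the scored hypothesis to decompose $\Omega(a)$ into hyperplane slabs $\{b\in\NN A : F_\sigma(b)=c\}$, establish that each slab is Zariski-dense in its hyperplane so that $\II(\Omega(a))$ is the principal ideal generated by $\prod_{\sigma,c}(F_\sigma(\theta)-c)$, and then take leading forms.

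Two small comments. First, your symbol $E_\sigma(a)$ for the set $F_\sigma(\NN A)\setminus(-F_\sigma(a)+F_\sigma(\NN A))$ collides with the paper's $E_\tau(a)$ introduced at the beginning of Section~\ref{finiteness}, which is an entirely different object; choose a different name. Second, the Zariski-density step deserves one extra sentence: you should note that since $c\in F_\sigma(\NN A)$ the slab is nonempty, and that on the hyperplane $F_\sigma=c$ the remaining scored conditions $F_\tau(b)\in F_\tau(\NN A)$ are satisfied on a translate of a full $(d-1)$-dimensional subcone of lattice points (because each $F_\tau(\NN A)$ is cofinite in $\NN$ and the facet $\sigma$ is $(d-1)$-dimensional). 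With that made explicit the proof is complete.
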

 
 \begin{lemma}\label{fiber of gr} Let $\NN A$ be scored. Then for any $a \in \ZZ^d$ and $\sigma \in \calF$, $$n_{\sigma,-a} = n_{\sigma,a} + F_{\sigma}(a).$$ 
 In particular, 
 \begin{enumerate}
 \item if $\sigma$ is a facet with the property that $F_{\sigma}(a) \leq 0$, then $n_{\sigma, ka} \leq k\cdot n_{\sigma,a}$ for large $k \in \NN$
       and furthermore $P_a^k = P_{ka} \cdot P$ for some $P \in \KK[\Theta]$;
 \item if $F_{\sigma}(a) \leq 0$ for all $\sigma \in \calF$ and $-a \notin \NN A$, then $n_{\sigma, ka} < k\cdot n_{\sigma,a}$ for some $\sigma \in \calF$ and large $k \in \NN$.
 \item if $a \in \NN A$, then $n_{\sigma,-a}=F_{\sigma}(a)$.
 \item if $F_{\sigma}(a) > 0$, then $n_{\sigma, ka} = 0 $ for large $k \in \NN$.
 \end{enumerate}
 \end{lemma}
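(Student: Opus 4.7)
The plan is to reduce every claim to a one-dimensional counting identity on the numerical semigroup $H_\sigma := F_\sigma(\NN A) \subseteq \NN$. For each facet $\sigma$ take $F_\sigma$ to be the primitive integer linear form vanishing on $\sigma$ and non-negative on $\RR_{\geq 0} A$; since $\ZZ A = \ZZ^d$ forces $\gcd H_\sigma = 1$, $H_\sigma$ is cofinite in $\NN$, and I write $G_\sigma := \NN \setminus H_\sigma$ for its (finite) gap set. With $c := F_\sigma(a) \in \ZZ$ the definitions unpack to $n_{\sigma,a} = |H_\sigma \setminus (H_\sigma - c)|$ and $n_{\sigma,-a} = |H_\sigma \setminus (H_\sigma + c)|$, both finite.

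For the main identity I assume $c \geq 0$ (the opposite case follows by swapping $a \leftrightarrow -a$). Direct unwinding gives
\[
n_{\sigma,-a} = |H_\sigma \cap [0, c{-}1]| + |\{g \in G_\sigma : g + c \in H_\sigma\}|, \qquad n_{\sigma,a} = |\{h \in H_\sigma : h + c \in G_\sigma\}|.
\]
Counting $|H_\sigma \cap [c, N]|$ two ways for $N \gg 0$ — once as $(N + 1 - |G_\sigma|) - (c - |G_\sigma \cap [0, c{-}1]|)$, and once by partitioning elements $m \in H_\sigma \cap [c, N]$ according to whether $m - c$ lies in $H_\sigma$ or $G_\sigma$ — yields the auxiliary identity $|\{g \in G_\sigma : g + c \in H_\sigma\}| - |\{h \in H_\sigma : h + c \in G_\sigma\}| = |G_\sigma \cap [0, c{-}1]|$. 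Substituting gives $n_{\sigma,-a} - n_{\sigma,a} = |H_\sigma \cap [0, c{-}1]| + |G_\sigma \cap [0, c{-}1]| = c$.

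The subparts then follow as elementary corollaries. (3) If $a \in \NN A$ then $c \in H_\sigma$, so $c + H_\sigma \subseteq H_\sigma$ and the same cofiniteness count gives $n_{\sigma,-a} = c$. (4) If $c > 0$, for $kc$ beyond the Frobenius number of $H_\sigma$ one has $H_\sigma - kc \supseteq \NN$, so $n_{\sigma,ka} = 0$. (1) When $c \leq 0$, the explicit formula gives $n_{\sigma,-a} = |\{h \in H_\sigma : h + |c| \in G_\sigma\}|$, and for large $k$ this set is empty, so $n_{\sigma,-ka} = 0$; the main identity then yields $n_{\sigma,ka} = k|c|$ and $n_{\sigma,a} = n_{\sigma,-a} + |c|$, whence $k n_{\sigma,a} - n_{\sigma,ka} = k n_{\sigma,-a} \geq 0$. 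Combining this with (4) on facets where $F_\sigma(a) > 0$ shows every exponent in $P := \prod_\sigma F_\sigma(\Theta)^{k n_{\sigma,a} - n_{\sigma,ka}}$ is non-negative, giving $P_a^k = P_{ka} \cdot P$. (2) The scored hypothesis says $-a \notin \NN A$ iff some $\sigma$ has $F_\sigma(-a) \in G_\sigma$; for such a $\sigma$, $h = 0 \in H_\sigma$ witnesses $n_{\sigma,-a} \geq 1$, whence $k n_{\sigma,a} - n_{\sigma,ka} = k n_{\sigma,-a} > 0$.

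The main obstacle is the central counting identity — once the contributions from the initial interval $[0, c{-}1]$ and from the finite gap set $G_\sigma$ are correctly separated, everything else is bookkeeping. The only other subtle point is invoking the scored hypothesis in (2) to detect non-membership in $\NN A$ from the one-dimensional data $F_\sigma(-a) \in H_\sigma$.
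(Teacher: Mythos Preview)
Your proof is correct and follows the same overall strategy as the paper: both reduce everything to a one-dimensional count on the numerical semigroup $H_\sigma = F_\sigma(\NN A)$, and the four corollaries are handled essentially identically (the paper's (1)--(4) are the same observations about $n_{\sigma,-a} \geq 0$, the Frobenius bound, and the scored criterion for membership in $\NN A$).

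The one genuine difference is in the proof of the main identity $n_{\sigma,-a} = n_{\sigma,a} + c$. You split $H_\sigma \setminus (H_\sigma + c)$ into an initial-interval piece and a gap-set piece, then run a large-$N$ double count to compare the gap contributions. The paper instead splits $\{b \in H_\sigma : b - c \notin H_\sigma\}$ according to whether $b - kc \in H_\sigma$ for some $k \geq 2$: the elements for which this never happens are exactly the $c$ minimal elements of $H_\sigma$ in each residue class mod $c$ (the Ap\'ery set), and the remaining elements biject with $\{h \in H_\sigma : h + c \notin H_\sigma\} = n_{\sigma,a}$ via ``nearest neighbour in $H_\sigma$ in the same residue class.'' The paper's argument is shorter and more structural; yours is more explicit about where the gap set enters and makes the role of cofiniteness (hence of $\ZZ A = \ZZ^d$) completely transparent. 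Either way the identity drops out in a couple of lines.
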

 
 \begin{proof} To prove $n_{\sigma,-a} = n_{\sigma,a} + F_{\sigma}(a)$ for any $a \in \ZZ^d$ and $\sigma \in \calF$, it's enough to show the case where $F_{\sigma}(a) > 0$. 
 Set $N=F_{\sigma}(\NN A)$ and $n = F_{\sigma}(a) > 0$.
 Then
        $$\begin{aligned}
             n_{\sigma,-a} &= \# \{ N \setminus (n+ N ) \} \\
             &= \# \{b \in N \mid b-n \notin N \} \\
              & = n + \# \{ b \in N \mid b-n \notin N \text{ but } b-kn \in N \text{ for some } k \geq 2\} \\
              & = n + \# \{ c \in N \mid c+n \notin N\} \\
              &= n+ \#\{ N \setminus (-n + N) \} = n+ n_{\sigma,a}.
              \end{aligned}$$ Note that for the second equality we need the assumption that $\NN A $ is scored.
 
 Now, we prove the four additional statements:
 \begin{enumerate}
 \item Notice that $n_{\sigma, ka} = kF_{\sigma}(-a)$ for large $k$ and that $n_{\sigma,a} = n_{\sigma,-a} + F_{\sigma}(-a)$ where $n_{\sigma,-a} \geq 0$. 
  \item By assumption, $-a$ lies on a hyperplane parallel to some facet say $\sigma_0$. Then $n_{\sigma_0, -a} > 0$ and hence 
  $n_{\sigma_0, ka} < k\cdot n_{\sigma_0,a}$ by (1). 
 \item $a \in \NN A$ implies $n_{\sigma,a} = 0$. 
 \item This follows from the definition. Indeed, since $\NN A$ is scored, $(F_{\sigma}(ka) + \NN_0) \subseteq F_{\sigma}(\NN A)$ for large $k$. Then $F_{\sigma}(ka) + F_{\sigma}(\NN A) \subseteq F_{\sigma}(\NN A)$ and hence $n_{\sigma,ka} =0$.
 \end{enumerate}
 \end{proof}

 \begin{theorem}\label{fiber} If $\NN A$ is a simplicial scored semigroup, then $$ \gr D_A/I =\KK \left[\overline{t^{-a_i}\cdot P_{-a_i}} ; i=1,\dots,n \right] \cong S_A$$ 
 where $I=\sqrt{\frak m \gr D_A}$ and $\overline{t^{-a_i}\cdot P_{-a_i}}$ is the image of $t^{-a_i}\cdot P_{-a_i}$ in $\gr D_A/I$.
 \end{theorem}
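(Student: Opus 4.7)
My plan is to exhibit an explicit $\KK$-algebra map $\bar{\phi}: S_A \to \gr D_A / I$ by $t^b \mapsto \overline{t^{-b} P_{-b}}$ for $b \in \NN A$, and to show it is an isomorphism. The simpliciality of $\NN A$ ensures that the facet functionals $F_{\sigma_1}, \ldots, F_{\sigma_d}$ form a basis of $(\ZZ^d)^*$, so setting $y_k := F_{\sigma_k}(\Theta)$ identifies $\KK[\Theta_1, \ldots, \Theta_d]$ with $\KK[y_1, \ldots, y_d]$, and by Lemma \ref{gr} each $P_a$ becomes the monomial $\prod_k y_k^{n_{\sigma_k, a}}$.

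First I would verify that $\bar{\phi}$ is a well-defined ring homomorphism. Lemma \ref{fiber of gr}(3) gives $P_{-b} = \prod_k y_k^{F_{\sigma_k}(b)}$ for $b \in \NN A$, whence $P_{-b} P_{-b'} = P_{-(b+b')}$ and multiplicativity follows. Next I would show $\Theta_j \in I$ for all $j$: since $I$ is $\ZZ^d$-graded, an element of $\KK[\Theta] = (\gr D_A)_0$ lies in $\frakm \gr D_A$ iff it lies in the monomial ideal $(P_{-a_1}, \ldots, P_{-a_n}) = (\prod_k y_k^{F_{\sigma_k}(a_j)} : j) \subset \KK[y]$. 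Its radical is the squarefree ideal on those $y_k$ appearing in some generator; since each facet $\sigma_k$ must be missed by some $a_j$ (else it would not be a facet of $\RR_{\geq 0}A$), the radical equals $(y_1, \ldots, y_d) = (\Theta_1, \ldots, \Theta_d)$, and each $\Theta_j \in I$.

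A graded-degree argument then shows $(\gr D_A / I)_a$ is either zero or one-dimensional, spanned by $\overline{t^a P_a}$: writing $t^a q(\Theta) P_a = q(0) t^a P_a + (q - q(0)) t^a P_a$, the second summand belongs to $I$ since $q - q(0) \in (\Theta_1, \ldots, \Theta_d) \subseteq I$. So it remains to identify $\{a \in \ZZ^d : t^a P_a \notin I\}$ with $-\NN A$. For $a = -b$ with $b \in \NN A$, the power $(t^{-b} P_{-b})^N = t^{-Nb} P_{-Nb}$ lies in $\frakm \gr D_A$ iff $P_{-Nb} \in (P_{-Nb - a_j} : j)_{\KK[\Theta]}$; the monomial containment would force $F_{\sigma_k}(a_j) \leq 0$ for all $k$ for some $j$, impossible for $a_j \in \NN A \setminus \{0\}$. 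So $\bar{\phi}(t^b) \neq 0$ for every $b \in \NN A$, giving injectivity.

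The main obstacle is the converse: showing $t^a P_a \in I$ whenever $-a \notin \NN A$. I would introduce the auxiliary function $\nu_k(m) := n_{\sigma_k, c}$, which depends only on $m = F_{\sigma_k}(c)$ and satisfies $\nu_k(-m) = \nu_k(m) + m$ by Lemma \ref{fiber of gr}, and compare, for large $N$ and a suitable $j$, the exponents $N \nu_k(F_{\sigma_k}(a))$ against $\nu_k(N F_{\sigma_k}(a) - F_{\sigma_k}(a_j))$ facet by facet, in order to get $P_a^N \in (P_{Na - a_j})_{\KK[\Theta]}$. The inequality is automatic when $F_{\sigma_k}(a) > 0$ (the right-hand side vanishes eventually by scoredness); for the remaining critical facets — those with $F_{\sigma_k}(a) = 0$, or with $F_{\sigma_k}(a) < 0$ and $-F_{\sigma_k}(a) \in F_{\sigma_k}(\NN A)$ — the inequality reduces to $a_j \in \sigma_k$. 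The crucial combinatorial point is that $-a \notin \NN A$ forces the set $K(a)$ of critical facets to be a proper subset of $\{1, \ldots, d\}$, so $\bigcap_{k \in K(a)} \sigma_k$ is a nontrivial face of $\RR_{\geq 0}A$; because the generators $a_1, \ldots, a_n$ meet every extremal ray of the cone, this face contains some $a_j$, and with this choice all facet inequalities hold simultaneously. Assembling everything gives $\gr D_A / I = \bigoplus_{b \in \NN A} \KK \cdot \overline{t^{-b} P_{-b}}$, which both equals the subalgebra generated by the $\overline{t^{-a_i} P_{-a_i}}$ and is identified with $S_A$ via $\bar{\phi}$.
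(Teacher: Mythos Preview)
Your argument is correct. Both you and the paper follow the same three-step skeleton --- show each $\Theta_i \in I$, show $t^a P_a \in I$ whenever $-a \notin \NN A$, and identify the quotient with $S_A$ --- but the executions differ in useful ways. The paper proves $\Theta_i \in I$ by constructing a specific lattice point $u_i$ dual to $F_{\sigma_i}$ and factoring $P_{u_i} = t^{-u_i}\cdot t^{u_i}P_{u_i}$; it then splits the analysis of $t^aP_a$ into the cases $a \notin -\widetilde{\NN A}$ (where a second auxiliary lattice point $b$ is built so that $P_b = P_{ka}$ and $t^{ka-b}\in I$) and $a \in -\widetilde{\NN A}$ (using Lemma~\ref{fiber of gr}(2),(3)); and it obtains the isomorphism with $S_A$ from a Krull-dimension count plus the observation that the generators satisfy the toric relations. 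Your approach sidesteps the auxiliary lattice points entirely by working with monomial divisibility in $\KK[y_1,\dots,y_d]$: the degree-zero computation gives $\Theta_i\in I$ directly, and your critical-facet set $K(a)$ being proper is exactly the contrapositive of the scored condition, which folds the paper's two subcases into a single argument. Your explicit graded bijection $\bar\phi$ also replaces the dimension count with a direct verification that each graded piece of $\gr D_A/I$ is one-dimensional and supported precisely on $-\NN A$. The net effect is a cleaner proof drawing on the same ingredients (Lemmas~\ref{gr} and~\ref{fiber of gr}); it also avoids the implicit issues in the paper's sketch about whether the points $u_i$ and $b$ actually lie in $\ZZ^d$.
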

 
 \begin{proof} We sketch how the proof of the left equality goes. Let $\calF = \{ \sigma_1,\dots, \sigma_d \}$ be the set of all facets of $\RR_{\geq 0} A$,
   and let 
   \[ C = -\widetilde{\NN A} = \{ a \in \ZZ^d \mid F_{\sigma}(a) \leq 0  \text{ for all } \sigma \in \calF\}. \]   
   We will prove the left equality in three steps. The first step shows $\Theta_i \in I$ for $i = 1,\dots,d$.
 The second step shows $t^a \cdot P_a \in I$ for all $a \in \ZZ^d \setminus C$.  
 Finally, the third step shows that $t^a \cdot P_a \in I$ if $a \in C \setminus (-\NN A)$, and $t^a \cdot P_a$ is a product of some $t^{-a_i} \cdot P_{-a_i}$, $i=1,\dots,n$, 
 if $a \in C \cap (-\NN A \setminus \{0\})$. 
 
 \begin{enumerate}
 \item For each $i=1,\dots,d$, consider the following subset of $\ZZ^d$: 
         $$ \{ F_{\sigma_i}(\Theta) = -1 \} \cap \left[ \bigcap_{j \neq i} \{ F_{\sigma_j}(\Theta) = 0 \} \right].$$
         Since $\NN A$ is simplicial, this is a one point set for each $i$, say $\{u_i \}$. 
	 Notice that since $t^{-u_i} \in I$, $ F_{\sigma_i}^{n_{\sigma_i,u_i}} = P_{u_i} = t^{-u_i}\cdot
	 t^{u_i} P_{u_i} \in I$ where $n_{\sigma_i,u_i} > 0$. Therefore, $F_{\sigma_i} \in I$ for each $i$. Since 
         $F_{\sigma_1},\dots,F_{\sigma_d}$ are linearly independent, we conclude that $\Theta_i \in I$ for $i = 1,\dots,d$.
 \item For $a \in \ZZ^d \setminus C$, $F_{\sigma}(a) >0$ for some $\sigma \in \calF$. By Lemma \ref{fiber of gr}(4), choose $k$ large so that 
       $$P_{ka} = \displaystyle{\prod_{F_{\sigma}(a) < 0 }} F_{\sigma}^{n_{\sigma, ka}}.$$ 
       Now, consider as in (1) the one-point set
       $$\left[  \bigcap_{F_{\sigma}(a) < 0} \{ F_{\sigma}( \Theta) = F_{\sigma}(ka) \} \right] \cap
       \left[ \bigcap_{F_{\sigma}(a) \geq 0} \{ F_{\sigma} (\Theta) =0\} \right] = \{b \} .$$
       We have $P_b = P_{ka}$ and $t^{ka-b} \in I$. By Lemma \ref{fiber of gr}(1) 
       $$ (t^aP_a)^k = t^{ka}P_a^k = t^{ka}P_{ka}\cdot P = (t^bP_b)\cdot P \cdot t^{ka-b} \in I.$$
       Therefore, $t^aP_a \in I$ as desired.
 \item Let $a \in C$.
 
       If $-a \notin \NN A$, by Lemma \ref{fiber of gr}(2) $(t^{a}P_a)^k = t^{ka}P_{ka} \cdot P $ for some nonconstant $P \in \KK[\Theta]$. Since $P$ is a product of some $F_{\sigma}$'s, $P \in I$ by (1) and hence $t^{a}P_a \in I$.
       
       If $-a \in \NN A \setminus \{0\}$, write $-a = \sum m_ia_i$. By Lemma \ref{fiber of gr}(3), 
       $$n_{\sigma,a} = F_{\sigma}(-a) = \displaystyle{\sum m_iF_{\sigma}(a_i)} = \displaystyle{\sum m_i n_{\sigma,-a_i}}$$ and therefore
       $$ t^aP_a =   t^{\sum m_i(-a_i)} \cdot \displaystyle{\prod_{\sigma \in \calF} F_{\sigma}^{\sum m_i n_{\sigma,-a_i}}} = \displaystyle{\prod
       (t^{-a_i}P_{-a_i} )^{m_i}}.$$
       
 \end{enumerate}

 To complete the proof, we establish the right isomorphism. First, recall that if $R\rightarrow S$ is a homomorphism of commutative rings 
 and $Q$ is a prime ideal in $S$ lying over a prime ideal $q$ of $R$, then dim$(S_Q/qS_Q) \geq \text{ht}Q-\text{ht}q$. 
 On the other hand, since $\gr D_A$ is finitely generated as a $\KK$-algebra which is also a domain, 
 each maximal ideal of $\gr D_A$ has height $2d$ (Here, we use the fact that $\dim \gr D_A = 2d$). Therefore, dim$(\text{gr}D_A/I) \geq d$. Now, consider the surjection from the polynomial ring $\KK[x_1,\dots,x_n]$ to $\KK \left[\overline{t^{-a_i}\cdot P_{-a_i}} ; i=1,\dots,n \right]$. By Lemma \ref{fiber of gr}(3), 
 $P_{-a_i} = \prod_{\sigma \in \calF} F_{\sigma}^{F_{\sigma}(a_i)}$. Observe that $\overline{t^{-a_i}\cdot P_{-a_i}}, i=1,\dots,n$, satisfy the relations in the toric ideal $I_A = \{ x^u-x^v | Au=Av \}$ (where for $u \in \ZZ^n$, $x^u:=x_1^{u_1}\cdots x_n^{u_n}$). Hence we have a surjection 
 $$S_A \cong \KK[x_1,\dots,x_n]/I_A \longrightarrow \KK \left[\overline{t^{-a_i}\cdot P_{-a_i}} ; i=1,\dots,n \right]$$ which is an isomorphism by comparing the dimensions.
 \end{proof}
 
 \begin{example}\label{badfiber} Consider $$A = \begin{pmatrix} 
                            1 & 0 & 0 & 1 \\
                            0 & 1 & 0 & 1 \\
                            0 & 0 & 1 & -1 
                            \end{pmatrix} .$$
             $S_A = \KK[s,t,u,stu^{-1}  ]$ is a $3$-dimensional normal toric algebra which is isomorphic to the toric algebra appearing in Example~\ref{hartshorne}. 
             By 4.1, 4.6, and 6.3 in \cite{Saito-Traves04}, 
             $$\begin{aligned}
                \gr D_A &= \KK[  s,t,u, stu^{-1}, \Theta_s, \Theta_t, \Theta_u, 
		 s^{-1} \Theta_s(\Theta_s+\Theta_u), t^{-1} \Theta_t(\Theta_t+\Theta_u), s^{-1}t^{-1}u
		 \Theta_s \Theta_t\\
                 & u^{-1}(\Theta_s+\Theta_u)(\Theta_t+\Theta_u), tu^{-1}(\Theta_s+\Theta_u),
		 t^{-1}u \Theta_t, su^{-1}(\Theta_t+\Theta_u), s^{-1}u \Theta_s ].
                 \end{aligned}$$
		 
	     Set $$\begin{aligned}
	        &a = s, b=t, c=u, d=stu^{-1},e= \Theta_s, f= \Theta_t, g= \Theta_u, h=s^{-1} \Theta_s(\Theta_s+\Theta_u), \\
		 &i=t^{-1} \Theta_t(\Theta_t+\Theta_u), 
		  j= u^{-1}(\Theta_s+\Theta_u)(\Theta_t+\Theta_u),
		  k=s^{-1}t^{-1}u\Theta_s\cdot \Theta_t,\\
                 &l=tu^{-1}(\Theta_s+\Theta_u),
		  m=t^{-1}u \Theta_t, 
		  n=su^{-1}(\Theta_t+\Theta_u), 
		  o=s^{-1}u \Theta_s
		 \end{aligned}
		 $$
		 Consider the surjection $\phi: \KK[a,\dots,o] \rightarrow \gr D_A$. 
		 Using Macaulay 2, we see that a primary decomposition of $\sqrt{\frak m \gr D_A}$ is the intersection of the two ideals
		 $(o, n, d, a, c, f + g, e, b, fj - il, fi + jm, f^2  + lm, fk + hm, hi - jk, fh - kl)$ and 
		 $(m, l, d, a, c, f, e + g, b, gk - io,gi + kn, g^2  + no, gj + hn, hi - jk, gh - jo)$ modulo Ker$ \phi$.
		 Therefore, the fiber $\pi^{-1}(0)$ has two components each of which is $4$-dimensional.
 \end{example}
 
 \begin{remark}\label{ref1} The left equality of Theorem~\ref{fiber} and Example~\ref{badfiber} can be achieved alternatively by a result of Saito. \\ 
 By Proposition 4.14 in \cite{S10}, 
 $ \pi^{-1}(\frakm) = \{ \mathfrak{P}(\mathfrak{q},\nu)\mid \nu \cap \RR_{\geq 0}A = \{0\} \}. $
 If $\NN A$ is simplicial, then $\pi^{-1} (\frakm) = \{ \mathfrak{P}(\frakm_0, -\RR_{\geq 0}A )\}$, which is the left equality of Theorem~\ref{fiber}.\\ 
 On the other hand, consider Example~\ref{badfiber}.
 If $\mathfrak{P}(\mathfrak{q}, \nu) \in \pi^{-1}(\frakm)$, then by \cite{S10} Proposition 4.14, 
 $ \mathfrak{q} \supseteq (\Theta_s, \Theta_t + \Theta_u) \text{ or } (\Theta_t, \Theta_s + \Theta_u). $
 If $\mathfrak{q} = (\Theta_s, \Theta_t + \Theta_u)$, then $\nu = \{\Theta_s \leq 0, \Theta_t + \Theta_u \leq 0\}$. If $\mathfrak{q} = (\Theta_t, \Theta_s + \Theta_u)$, then $\nu = \{\Theta_t \leq 0, \Theta_s + \Theta_u \leq 0\}$. They are the two minimal primes mentioned in Example~\ref{badfiber}.
 \end{remark}

 As a corollary of Theorem~\ref{fiber}, we can describe the fibers $\pi^{-1}(p)$ for every nonzero closed point $p$ in $\Spec S_A$.
 Let $p$ be in the torus orbit $O_{\tau}$ for some $e$-dimensional face $\tau$ of $\RR_{\geq 0} A$, so $p$ corresponds to a semigroup homomorphism
 $f_p: \NN A \rightarrow \KK$ with $f_p(a_i)=c_i$ where $c_i=0$ if and only if $a_i \notin \NN(A \cap \tau)$. Then $p$ corresponds to the maximal ideal
 $\frak m_p=(t^{a_1} - c_1,\dots, t^{a_n}-c_n)$ of $S_A$. The following theorem gives the reduced induced structure of $\pi^{-1}(p)$.
 
 \begin{theorem} \label{fiber2} Under the hypotheses of Theorem~\ref{fiber}, we have
 \[   \frac{\gr D_A}{\sqrt{\frak m_p \gr D_A}} \cong S_B \otimes \KK[\delta_1,\dots,\delta_e],
 \]
 where $S_B$ is the toric algebra generated by a simplicial scored semigroup $$\NN B \cong \frac{\NN A + \ZZ(A\cap \tau)}{\ZZ(A \cap \tau)}$$ and 
 $\KK[\delta_1,\dots,\delta_e]$ is a polynomial ring in $e (= \dim \tau)$ variables.
 \end{theorem}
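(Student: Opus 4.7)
The plan is to generalize the proof of Theorem~\ref{fiber} by splitting the facets of $\RR_{\geq 0}A$ into those that contain $\tau$ and those that do not, and extracting a tensor decomposition of $\gr D_A / \sqrt{\frak m_p \gr D_A}$ reflecting the ``tangent-to-orbit $\times$ transverse'' geometry at $p$. Relabel so that $a_1,\dots,a_m$ generate $\NN(A\cap\tau)$; the images of the remaining columns in $\ZZ^d/\ZZ(A\cap\tau)$ give the matrix $B$, and $\NN B$ is simplicial and scored because the facets of $\RR_{\geq 0}B$ correspond bijectively to the facets of $\RR_{\geq 0}A$ that contain $\tau$. Put $\calF_0 = \{\sigma \in \calF : \tau \subseteq \sigma\}$ and $\calF_1 = \calF \setminus \calF_0$; simpliciality forces $|\calF_1| = e = \dim\tau$.

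Next I would analyze the $F_\sigma$ inside $\sqrt{\frak m_p \gr D_A}$ via the Musson decomposition of Lemma~\ref{gr}. For each $\sigma \in \calF_0$ the dual primitive vector $u_\sigma$ satisfies $-u_\sigma \in \rho\setminus\tau$ for some ray $\rho$ of $\RR_{\geq 0}A$, so a positive multiple $-ku_\sigma$ lies in $\NN A$ and, crucially, can be written using only the columns $a_j$ with $j>m$; hence $t^{-ku_\sigma} \in \frak m_p$, and the argument of Step~1 of Theorem~\ref{fiber} yields $F_\sigma \in \sqrt{\frak m_p \gr D_A}$. For $\sigma \in \calF_1$ the same argument fails because $-ku_\sigma \in \NN(A\cap\tau)$ and $t^{-ku_\sigma}$ is a unit at $p$; the restrictions of these $e$ forms to $\CC(A\cap\tau)$ form (up to scaling) the basis dual to $\rho_1,\dots,\rho_e$, so I expect them to descend to the free polynomial variables $\delta_1,\dots,\delta_e$ in the quotient.

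Then I would mimic Steps~2 and~3 of Theorem~\ref{fiber}: if $a \notin -\widetilde{\NN A}+\ZZ(A\cap\tau)$, then $t^a P_a \in \sqrt{\frak m_p\gr D_A}$ (by Lemma~\ref{fiber of gr}(1),(2),(4), after multiplying by a suitable monomial in $\frak m_p$); whereas for $a$ in this set, $t^a P_a$ reduces modulo the radical to a monomial in the surviving generators $\overline{t^{-a_j}P_{-a_j}}$ ($j>m$) and the $\delta_i$, using that $t^{a_j}$ is a unit at $p$ for $j\leq m$. This produces a surjection
\[
S_B \otimes \KK[\delta_1,\dots,\delta_e] \twoheadrightarrow \gr D_A/\sqrt{\frak m_p \gr D_A},
\]
where the $S_B$ factor arises as in Theorem~\ref{fiber} applied to the simplicial scored semigroup $\NN B$. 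A Krull-dimension count closes the argument: $\gr D_A$ has dimension $2d$ and $\frak m_p\gr D_A$ is generated by $d$ elements, so the quotient has dimension at least $d = (d-e)+e = \dim(S_B\otimes\KK[\delta_1,\dots,\delta_e])$; since the left-hand side is a domain, the surjection is an isomorphism.

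The main obstacle will be to verify that the $F_\sigma$ for $\sigma\in\calF_1$ are \emph{algebraically independent} modulo $\sqrt{\frak m_p \gr D_A}$ rather than satisfying hidden relations with the transverse generators $\overline{t^{-a_j}P_{-a_j}}$. Geometrically this reflects the local cotangent splitting at a smooth point of the orbit $O_\tau$, but combinatorially one must check that none of the $P_{-a_j}$ used to generate the $S_B$-factor contains a factor of $F_\sigma$ for $\sigma\in\calF_1$; equivalently, that $F_\sigma(-a_j)=0$ for all $\sigma\in\calF_1$ and all $j>m$, which follows from $a_j\in\NN A$ and the fact that the $\rho_i$, $i\leq e$, lie in $\tau$. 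Granting this, the tangential polynomial subring decouples cleanly from the transverse toric subring in the quotient, and the tensor-product isomorphism falls out.
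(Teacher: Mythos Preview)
Your strategy---splitting the facets into $\calF_0$ and $\calF_1$ and imitating the three steps of Theorem~\ref{fiber} directly inside $\gr D_A$---is different from the paper's route, which first \emph{localizes}: since $t^a$ is a unit modulo $\frak m_p$ for $a\in\NN(A\cap\tau)$, one may replace $\gr D_A$ by $\gr D_A[\tau^{-1}]\cong\gr D(\KK[\NN A+\ZZ(A\cap\tau)])$, then split the semigroup as $\NN A+\ZZ(A\cap\tau)=\NN B\oplus\ZZ(A\cap\tau)$ to obtain a tensor decomposition $\gr D_B\otimes\gr D_{\ZZ(A\cap\tau)}$ \emph{before} passing to the quotient, and finally apply Theorem~\ref{fiber} to the first factor and the cotangent computation for a torus to the second.

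Your approach, as written, has a genuine gap at exactly the point you flag as the main obstacle. The claim that $F_\sigma(a_j)=0$ for all $\sigma\in\calF_1$ and all $j>m$ is false: it says $a_j\in\bigcap_{\sigma\in\calF_1}\sigma$, which is the complementary face $\tau'$ spanned by the rays \emph{not} in $\tau$; but $j>m$ only means $a_j\notin\tau$, not $a_j\in\tau'$. (Take $A=\bigl(\begin{smallmatrix}1&0&1\\0&1&1\end{smallmatrix}\bigr)$, $\tau=\RR_{\ge0}e_1$, $a_3=(1,1)$; then $\sigma_1=\RR_{\ge0}e_2\in\calF_1$ and $F_{\sigma_1}(a_3)=1$.) Consequently $P_{-a_j}$ \emph{does} contain factors $F_\sigma$ with $\sigma\in\calF_1$, and one computes in the quotient that $\overline{t^{-a_j}P_{-a_j}}$ equals a nonzero constant times an $S_B$-type generator times a nontrivial monomial in the $\delta_i$. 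This entanglement means your proposed map $S_B\otimes\KK[\delta]\to\gr D_A/\sqrt{\frak m_p\gr D_A}$ is not well-defined on the $S_B$ factor: toric relations among the images $\bar b_j$ of the $a_j$ in $\NN B$ do not lift to relations among the $\overline{t^{-a_j}P_{-a_j}}$, because the residual $\delta$-monomials on the two sides differ.

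The fix is precisely the paper's move: rather than using the columns $a_j$ themselves, choose lifts of the columns of $B$ lying in the complementary direction---equivalently, split $\NN A+\ZZ(A\cap\tau)=\NN B\oplus\ZZ(A\cap\tau)$ first. Once you do that, the localization/tensor argument is both shorter and cleaner than re-running Steps~1--3 by hand. (A minor side point: $\frak m_p$ is generated by $n$ elements, not $d$; the dimension bound you want comes instead from $\operatorname{ht}\frak m_p=d$ via the going-down inequality, exactly as in the last paragraph of the proof of Theorem~\ref{fiber}.)
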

 \begin{proof} First, notice that $t^a$, $a \in \NN (A \cap \tau)$, acts as a unit on $\frac{\gr D_A}{\sqrt{\frak m_p \gr D_A}}$ because $ p \in O_{\tau}$.
 So, by abusing the notation 
 \[   \frac{\gr D_A}{\sqrt{\frak m_p \gr D_A}} \cong   \frac{\gr D_A[\tau^{-1}]}{\sqrt{\frak m_p \gr D_A[\tau^{-1}]}} \] 
 where $\tau^{-1}$ means we invert $t^a$ for all $a \in \NN (A \cap \tau).$
 Note also that $$\gr D_A [\tau^{-1}] \cong \gr D(S_A[\tau^{-1}]) = \gr D(\KK[\NN A + \ZZ(A\cap \tau)]).$$
 
 Next, chose a simplicial scored semigroup $\NN B$ so that 
 $$\NN A + \ZZ(A \cap \tau) = \NN B \oplus \ZZ(A \cap \tau).$$ 
 To do this, let's first assume $\NN A$ is normal.
 By an exercise of section 1.2 in \cite{Fu}, $\frac{\RR_{\geq 0} A + \RR \tau}{\RR \tau}$ is a simplicial rational polyhedral cone with facets $\frac{\gamma + \RR \tau}{\RR
 \tau}$ where the $\gamma$'s are the facets of $\RR_{\geq 0} A$ containing $\tau$. So $\frac{\NN A + \ZZ(A\cap \tau)}{\ZZ(A \cap \tau)}$ is a simplicial normal semigroup in $\ZZ^d/ \ZZ(A \cap \tau)$. $\NN B$ can be obtained by choosing 
 suitable elements in $[\NN A + \ZZ(A \cap \tau)] \setminus \ZZ(A \cap \tau)$. For general simplicial scored semigroup $\NN A$, we just have to notice that
 $\widetilde{\NN A} \setminus \NN A$ is a union of hyperplane sections parallel to some facets of $\RR_{\geq 0} A$. So $\widetilde{\ZZ (A \cap \tau)} = \ZZ(A \cap \tau)$ and
 $\widetilde{[\NN A + \ZZ(A \cap \tau)] } \setminus [\NN A + \ZZ(A \cap \tau)]$ is a union of hyperplane sections parallel to some facets of $\RR_{\geq 0} A$ containing $\tau$.
 
 Now, $$\gr D_A [\tau^{-1}] \cong \gr D(\KK[\NN B \oplus \ZZ(A\cap \tau)]) \cong \gr D_B \otimes \gr D_{\ZZ(A \cap \tau)}.$$
 Therefore,
  \[\begin{aligned}
     \frac{\gr D_A}{\sqrt{\frak m_p \gr D_A}} &\cong \frac{\gr D_B \otimes \gr D_{\ZZ(A \cap \tau)}}{\sqrt{\frak m_p \gr D_B \otimes \gr D_{\ZZ(A \cap \tau)}}} \\
                                              & \cong \frac{\gr D_B}{\sqrt{\frak m_B \gr D_B}} \otimes \KK[\delta_1,\dots,\delta_e] \\
					      &\cong S_B \otimes \KK[\delta_1,\dots,\delta_e]
     \end{aligned}
     \]
     by Theorem~\ref{fiber}, where $\delta_1,\dots,\delta_e$ are the standard derivations of $\KK[\ZZ(A \cap \tau)]$.
 \end{proof}

 \subsection{Characteristic cycles}

 Let $D := D(R;\KK)$ as defined in section~\ref{pre}.
 Let $M$ be a $D$-module with a filtration $\{ M_i \}$ such that $D_iM_j \subseteq M_{i+j}$.
 The associated graded module $\gr M := \bigoplus M_i/M_{i-1}$ has the natural $\gr D$-module structure. 
 We call $\{ M_i\}$ a good filtration if $\gr M$ is finitely generated over $\gr D$.
 If $M$ is finitely generated over $D$ by $x_1,\dots, x_n$, then the filtration $\{ \sum^n_{j=1}D_ix_j \}$ is good.
 
 From now on, we assume that $\gr D$ is Noetherian. This is always the case when $R$ is regular.
 
 For a $D$-module $M$ with a good filtration $\{M_i\}$, define the characteristic variety $\Ch (M)$ of $M$ to be the support of the $\gr D$-module $\gr M$.
 \[ \text{i.e. } \Ch (M) = \Var (\text{ann}_{\gr D} \gr M ) \subseteq \Spec(\gr D). \]
 The characteristic cycle $\cycle (M)$ of $M$ is the formal sum of the irreducible components $V_i$ of $\Ch (M)$ counted with multiplicity. More precisely,
 \[  \cycle (M) = \displaystyle{\sum m_i V_i}
 \] where the multiplicity $m_i$ is the length of the $(\gr D)_{p_i}$-module $(\gr M)_{p_i}$ and $p_i$ is the prime ideal corresponding to $V_i$.
 
 $\Ch (M)$ and $\cycle (M)$ do not dependent on the choice of good filtration.
 A more detailed discussion about characteristic varieties can be found in \cite{Ginsburg}.
 
 \begin{example}\label{CC0}  $R$ is naturally a $D$-module generated by the identity $1$. With the filtration $\{D_i\cdot 1\}$, $\gr R = R$ is the $\gr D$-module generated by $1$.
 So $R \cong \gr D / \text{ann}_{\gr D}(1)$ and therefore $\Ch(R) = \Var(\text{ann}_{\gr D}(1))$ is abstractly isomorphic to the ambient variety $\Spec R$.
 \end{example}
 
 As we mentioned in the introduction, many invariants of $H^i_I(R)$ can be computed via the characteristic cycles when $R$ is a polynomial algebra over $\KK$
 (see e.g. \cite{Montaner1}, \cite{Montaner2}).
 In this subsection, we compute the characteristic cycles of some local cohomology modules $H^i_I(S_A)$ using our results of finiteness properties in section~\ref{finiteness}.
 By the main result in \cite{Saito-Traves04},  $\gr D_A$ is finitely generated over $\KK$ if and only if $\NN A$ is scored. 
 In particular, $\gr D_A$ is Noetherian when $\NN A$ is scored, so it makes sense to talk about characteristic cycles in this case.
 
 \begin{example}\label{CC1} For a $1$-dimensional toric algebra $S_A = \KK[t^{a_i} \mid i=1,\dots, n ]$, $\Ch(H^1_I(S_A))$ is particularly simple.
 As in Proposition~\ref{notCM}, \[ \gr D_A = \KK \left[ t \xi, t^{\mid \Omega(w) \mid} \xi^{\mid \Omega(-w) \mid} : \lvert w \rvert \in \{a_1,\dots,a_n \}
 \cup \Hole (\NN A) \right]. \]
 For any monomial ideal $I \neq 0$ of $S_A$, $H^1_I(S_A) = \frac{D_A \cdot (1/t)}{S_A}$ by Theorem~\ref{cyclic}. Notice that $t^{a_i} \in \sqrt{\text{ann}_{\gr D_A} \gr (H^1_I(S_A))}$
 and that $\xi^{a_i} \notin \sqrt{\text{ann}_{\gr D_A} \gr (H^1_I(S_A))}$. 
 So by  Theorem~\ref{fiber}, \[  \frac{\gr D_A}{\sqrt{\text{ann}_{\gr D_A}\gr (H^1_I(S_A))}} = \KK [\delta_i \mid i = 1,\dots,n] \cong S_A \]
 where $\delta_i$ is the image of $\xi^{a_i}$ in $\frac{\gr D_A}{\sqrt{\text{ann}_{\gr D_A}\gr (H^1_I(S_A))}}.$
 Therefore, $\Ch (H^1_I(S_A))$ is abstractly isomorphic to the ambient toric variety $\Spec (S_A)$.
 Furthermore, in view of the exact sequence
 \[ 0 \rightarrow S_A \rightarrow D_A\cdot(1/t) \rightarrow H^1_I(S_A) \rightarrow 0,\]
 we have  \[  \cycle(S_A[1/f]) = \Ch(S_A) + \Ch (H^1_I(S_A)) \text{ for any monomial }f \in S_A \]
 by the additivity of the characteristic cycles on exact sequences.
 In particular, $\Ch(S_A[1/f])$ has two components each of which is abstractly isomorphic to $\Spec (S_A)$.
 \end{example}

 \begin{example}\label{CC2}
 Consider the toric algebra in Example~\ref{2dim}.
 Let $p= \Theta_t$ and $q= \Theta_s$.
 Then we have
 \[  \gr D_A = \KK[t,ts^2,ts,t^{-1}(2p-q)^2, s(2p-q), t^{-1}s^{-1}(2p-q)q, t^{-1}s^{-2}q^2,s^{-1}q, p, q].
 \]
 Set
 \[ \begin{aligned}
      &a=t, b=ts^2,c=ts,d=t^{-1}(2p-q)^2, e=s(2p-q), \\ 
                &f=t^{-1}s^{-1}(2p-q)q, g=t^{-1}s^{-2}q^2,h=s^{-1}q, i=p, j=q,
	\end{aligned}
 \]
 and consider the surjection $\phi: \KK[a,\dots,j] \rightarrow \gr D_A$.
 The following table gives the information about the characteristic cycles. 
 Again, notice that each $M$ is cyclic by Theorem~\ref{cyclic}, 
 so $\ann_{\gr D_A} \gr M$  is easy to compute.
 
 \begin{tabular}{|c|c|c|}
 
 \hline
 
 $M $                 &    $ J= \phi^{-1}(\ann_{\gr D_A} \gr M)$      &   primary decomposition of $\sqrt{J}$        \\
 
 \hline
   $S_A[1/t]$         &	$(f,g,h,i,j)+ \text{Ker} \phi$                 &	  $(a,c,f,g,h,i,f,bd-e^2)\cap (d,e,f,g,h,i,j,ab-c^2)$	\\
 
 \hline
   $H^1_{(t)}(S_A)$   &  $(f,g,h,i,j,a)+ \text{Ker} \phi$      	       &	$(a,c,f,g,h,i,f,bd-e^2)$				\\
 
 \hline
   
   $S_A[1/ts^2]$      &	 $(d,e,f,i,j)+ \text{Ker} \phi$    		&	$(j, i, h, g, f, e, d, ab - c^2 )\cap (j, i, f, c, b, e, d, ag - h^2 )$	\\
 
 \hline
 
   $H^1_{(ts^2)}(S_A)$   &  $(d,e,f,i,j,b)+ \text{Ker} \phi$   	       &	$(j, i, f, e, d, c, b, ag - h^2 )$			\\
 
 \hline
 
 $S_A[1/ts]$      &	 $(i,j)+ \text{Ker} \phi$    		&	
 $\begin{aligned} &(j, i, h, c, a, b, e, dg - f^2 )\cap (j, i, h, g, f, c, a, bd - e^2 )\\
                  &\cap (j, i, h, g, f, e, d, ab - c^2 )\cap (j, i, f, c, b, e, d, ag - h^2 )  \end{aligned}$	\\
 
 \hline
 
 $H^2_{(t,ts^2)}(S_A)$   &  $(a,b,c,e,h,i,j) + \text{Ker} \phi$        &      $(a,b,c,e,h,i,j, dg-f^2)$ \\
 
 \hline
 
 \end{tabular}
 
 \end{example} 
 
  \begin{example}\label{CC3}
  Consider the toric algebra in Example~\ref{badfiber} and use the notation there.
  By Theorem~\ref{cyclic}, we see that
  $$J:=\phi^{-1}  (\text{ann}_{\gr D_A} \gr (H^1_{(s)}(S_A))) = (a,e,f,g,i,j,k,m,n) + \text{Ker} \phi$$ and the primary decomposition of $\sqrt{J}$ is
    \[ \begin{aligned}
    \sqrt{J} = &  (o, n, m, k, j,i, c, a, f, g, h, e) \cap (n, m, l, k, j, i, d, a, f, g, h, e) \\
               & \cap (n, m, k, j, i, d, c, a, f, g, e, bh - lo).   \\   
    \end{aligned} \]
    So $\cycle (H^1_{(s)}(S_A))$ is a sum of varieties which are not all isomorphic.
    
    To compute $\Ch (H^3_{\frak m}(S_A))$, we need
    $$J':=\phi^{-1}  (\text{ann}_{\gr D_A} \gr (H^3_{\frak m}(S_A))) = (a,b,c,d,e,f,g,l,m,n,o) + \text{Ker} \phi $$ 
    and$\sqrt{J'} = (a,b,c,d,e,f,g,l,m,n,o,hi-jk).$
  
 \end{example}
  
  Inspired by Examples \ref{CC1}, \ref{CC2}, \ref{CC3}, we have the following 
 
 \begin{theorem}\label{chmax} For any scored pointed semigroup $\NN A$, the characteristic variety
 $\Ch(H^d_{\frak m}(S_A))$ is abstractly isomorphic to the ambient toric variety $\Spec (S_A)$.
 \end{theorem}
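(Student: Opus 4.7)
The plan is to pick a specific cyclic generator $u$ of $H^d_\frakm(S_A)$ and compute $\sqrt{\ann_{\gr D_A}(\bar u)}$ explicitly, identifying the resulting quotient ring with $S_A$. By Remark~\ref{max}(1), $H^d_\frakm(S_A)$ is cyclic over $D_A$; moreover, the $E_\tau$-calculation (for $b\in -\mathrm{int}(\RR_{\geq 0}A)\cap\ZZ^d$, proper faces $\tau$ give $E_\tau(b)=\emptyset$ because $F_\sigma(b)<0$ for every facet $\sigma\supseteq \tau$, while $E_{\RR_{\geq 0}A}(b)=\{0\}$) shows that $-\mathrm{int}(\RR_{\geq 0}A)\cap \ZZ^d$ is a single $\sim$-equivalence class. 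Thus $H^d_\frakm(S_A)$ is $D_A$-simple by Theorem~4.1.6 of \cite{Saito-Traves01} and any nonzero homogeneous element generates it. I fix $\alpha \in \mathrm{int}(\RR_{\geq 0}A)\cap \NN A$, set $u:=\overline{t^{-\alpha}}$, and equip $H^d_\frakm(S_A)$ with the good filtration $M_i := D_i u$, so that $\Ch(H^d_\frakm(S_A))=\Var(\ann_{\gr D_A}(\bar u))$.

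Next I would carry out a lift-and-evaluate computation to pin down $\ann_{\gr D_A}(\bar u)$. For $\xi=t^a Q(\Theta)P_a\in (\gr D_A)_a$ of order $r=\deg Q+\deg P_a$, the natural lift $\tilde\xi=t^a Q(\theta)P_a(\theta)\in D_A$ acts by $\tilde\xi u=Q(-\alpha)P_a(-\alpha)\,t^{a-\alpha}$. The decisive point is that, when $a-\alpha\in -\mathrm{int}(\RR_{\geq 0}A)\cap\ZZ^d$, the minimal order of any differential operator sending $u$ to $t^{a-\alpha}$ equals $\deg P_a$: the Saito--Traves identity $\gr(t^a\II(\Omega(a)))=t^a\KK[\Theta]P_a$ forces every nonzero $R\in\II(\Omega(a))$ to satisfy $\deg R\geq \deg P_a$, with $R=P_a$ achieving equality and $P_a(-\alpha)\neq 0$ because $F_\sigma(\alpha)>0$ for every facet $\sigma$ (and $-\alpha\notin \NN A\supseteq\Omega(a)$ by pointedness). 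Consequently the class $\xi\bar u\in M_r/M_{r-1}$ vanishes precisely when $a-\alpha\notin -\mathrm{int}(\RR_{\geq 0}A)$ or $\deg Q\geq 1$, giving
\[
(\ann_{\gr D_A}(\bar u))_a=\begin{cases} t^a\KK[\Theta]P_a, & a-\alpha\notin -\mathrm{int}(\RR_{\geq 0}A),\\ t^a(\Theta_1,\ldots,\Theta_d)P_a, & a-\alpha\in -\mathrm{int}(\RR_{\geq 0}A). \end{cases}
\]

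Finally, I extract the radical through a three-case argument on $a$ using Lemma~\ref{fiber of gr}, parallel in spirit to the proof of Theorem~\ref{fiber}: (i) if $a\notin -\widetilde{\NN A}$, some $F_\sigma(a)>0$ drives $ka-\alpha$ out of the support for large $k$ by Lemma~\ref{fiber of gr}(4), placing all of $(\gr D_A)_{ka}$ into the annihilator; (ii) if $a\in -\widetilde{\NN A}\setminus -\NN A$, Lemma~\ref{fiber of gr}(2) produces a nonconstant factor $P_a^k/P_{ka}$ built from $F_\sigma$'s, all lying in $(\Theta_1,\ldots,\Theta_d)$ because $F_\sigma(0)=0$; and (iii) if $a\in -\NN A$, Lemma~\ref{fiber of gr}(3) yields $P_a^k=P_{ka}$, so $\xi^k\in (\ann)_{ka}$ iff $Q(0)=0$, making the radical agree with the annihilator in these degrees. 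Assembling these,
\[
\sqrt{\ann_{\gr D_A}(\bar u)}_a=\begin{cases} t^a\KK[\Theta]P_a, & a\notin -\NN A,\\ t^a(\Theta_1,\ldots,\Theta_d)P_a, & a\in -\NN A, \end{cases}
\]
so the quotient ring is $\bigoplus_{a\in -\NN A}\KK\cdot\overline{t^aP_a}$. The additivity $P_aP_b=P_{a+b}$ for $a,b\in -\NN A$ from Lemma~\ref{fiber of gr}(3) yields the multiplication $\overline{t^aP_a}\cdot\overline{t^bP_b}=\overline{t^{a+b}P_{a+b}}$, identifying the quotient with the semigroup algebra $\KK[-\NN A]\cong S_A$ and hence $\Ch(H^d_\frakm(S_A))\cong\Spec S_A$. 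I expect the main obstacle to be the intermediate case $a\in -\widetilde{\NN A}\setminus -\NN A$, the ``saturation gap'' in which the scored hypothesis is essential via Lemma~\ref{fiber of gr}(2); cases (i) and (iii) are more immediate consequences of the other parts of Lemma~\ref{fiber of gr}.
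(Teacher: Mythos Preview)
Your approach is essentially the paper's: choose an interior $\alpha$, generate $H^d_\frakm(S_A)$ by $t^{-\alpha}$, and determine $\sqrt{\ann_{\gr D_A}(\bar u)}$ by the same three-way split on $a\in\ZZ^d$ using Lemma~\ref{fiber of gr}, identifying the quotient with $\KK[-\NN A]\cong S_A$. The paper skips your intermediate description of the (non-radical) annihilator and argues the four needed facts about $\sqrt J$ directly; your extra step is more informative but not required.

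One inaccuracy to flag: your displayed formula for $(\ann_{\gr D_A}(\bar u))_a$ splits on whether $a-\alpha\in -\mathrm{int}(\RR_{\geq 0}A)$, but for scored \emph{non-normal} $\NN A$ the support of $H^d_\frakm(S_A)$, namely $\ZZ^d\setminus\bigcup_\sigma(\NN A+\ZZ(A\cap\sigma))$, can be strictly larger than $-\mathrm{int}(\RR_{\geq 0}A)\cap\ZZ^d$ (e.g.\ for $d=1$, $\NN A=\{0,2,3,4,\dots\}$, the support is $\{1\}\cup\{-1,-2,\dots\}$). So the formula is not literally correct in those degrees. Fortunately this never bites your radical computation: in cases (ii) and (iii) you only use degrees $ka$ with $ka-\alpha\in -\mathrm{int}$, where your formula is right; in case (i) you use large $k$ for which $F_\sigma(ka-\alpha)\in F_\sigma(\NN A)$, so $ka-\alpha$ genuinely leaves the support and $(\ann)_{ka}$ really is all of $(\gr D_A)_{ka}$. (This last fact is what you need in case (i); it follows because $F_\sigma(\NN A)$ contains all large integers, which is implicit in the proof of Lemma~\ref{fiber of gr}(4) rather than in its statement. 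The paper argues it directly.) With the condition corrected to ``$a-\alpha$ lies in the support of $H^d_\frakm(S_A)$'' your intermediate formula becomes valid, and the rest goes through as written.
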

 \begin{proof}
 Since $\NN A$ is pointed, by the Ishida complex \begin{equation}\label{ishida} H^d_{\frakm}(S_A)= \frac{\KK[\ZZ^d] }{ \KK [\bigcup_{\sigma: \text {facet}} (\NN A + \ZZ(A \cap \sigma))]} .\end{equation}
 By Theorem~\ref{cyclic}, $\KK[\ZZ^d] = D_A \cdot (1/t^{\alpha})$ for some interior point $\alpha$, i.e. for some 
 $\alpha \in \NN A \setminus [\bigcup_{\sigma:\text {facet}} \NN (A \cap \sigma)].$
 Consider the expression of $\gr D_A$ in Lemma~\ref{gr}.
 Denote $J:= \ann_{\gr D_A} \gr (H^d_{\frak m}(S_A))$.
 Notice that:
 \begin{enumerate}
  
  \item For any $a \in \ZZ^d \setminus (- \widetilde{\NN A})$ and for any facet $\sigma$ with $F_{\sigma}(a) >0$, 
  there exists $n \in \NN$ so that $na - \alpha \in \NN A + \ZZ(A \cap \sigma)$. Therefore, 
  $$t^a \KK[\Theta_1,\dots,\Theta_d]\cdot P_a \subseteq \sqrt{J}$$ for all $a \in \ZZ^d \setminus (- \widetilde{\NN A})$.
  
  \item For $a = 0 \in \ZZ^d$, $P_a = 1$ and $\theta_i \cdot t^{-\alpha} = -\alpha_i t^{-\alpha} \in H^d_{\frak m}(S_A) $. So by considering the order filtration,
   $\Theta_i \in J$ for all $i=1,\dots,d$.
  
  \item By exactly the same argument as in the proof of Theorem~\ref{fiber}(3), we have $t^a P_a \in \sqrt{J} $ for $a \in [-\widetilde{\NN A} \setminus (- \NN A)]$.
  Also, for $a \in - \NN A \setminus \{0\}$, $t^a P_a$ is a product of some $t^{-a_i} P_{-a_i}$.
  
  \item For $a \in - \NN A \setminus \{0\}$, $t^aP_{a} \cdot t^{-\alpha} =P(-\alpha)t^{a-\alpha} \neq 0 $ in $\gr (H^d_{\frak m}(S_A))$ because 
  $\alpha$ is an interior point and because $a- \alpha \in \ZZ^d \setminus \bigcup_{\sigma: \text {facet}} \KK [\NN A + \ZZ(A \cap \sigma)].$
  So $t^aP_{a} \notin \sqrt{J}$ for $a \in - \NN A \setminus \{0\}$.
    \end{enumerate}
   Therefore, 
   \[   \frac{\gr D_A } {\sqrt{J}} = \KK \left[ \overline{t^{-a_i}P_{-a_i}} \big| a_i :\text{ columns of }A \right]\]
   which is isomorphic to $S_A$ by a similar argument as in 
   the final part of the proof of Theorem~\ref{fiber}.
  \end{proof}
 
 \begin{remark}\label{ref2} We mention that the description of $\sqrt{J}$ in Theorem~\ref{chmax} can be obtained alternatively using Theorem~6.2(2) in \cite{S10}.
 
 For $\beta$, let $\frakm_{\beta}$ be the ideal $(\theta_1 - \beta_1 , \dots, \theta_d - \beta_d)$ of the polynomial ring $\KK [ \theta_1, \dots, \theta_d]$. Note that $\frakm_{\beta} = \frakm_0 +\beta$ in the notation of \cite{S10}. Then $L(\frakm_{\beta}) = D_A / I(\frakm_{\beta}) $ is an irreducible $D_A$-module (see Theorem 4.1.6 in \cite{Saito-Traves01}, (5.2) in \cite{S10}).\\
 Take $\alpha \in \ZZ^d$ as in the proof of Theorem~\ref{chmax}. Then we claim that 
 \begin{equation}\label{HcongL} H^d_{\frakm_A}(S_A) \cong L(\frakm_{-\alpha}).
 \end{equation}
 Since $\tau(\frakm_0 -\alpha) = -\RR_{\geq 0}A$, $\sqrt{J} = \sqrt{\gr I(\frakm_{-\alpha})} = \mathfrak{P}(\frakm_0, - \RR_{\geq 0}A)$ by \eqref{HcongL} and Theorem 6.2(2) in \cite{S10}.\\
 The claim \eqref{HcongL} can be proved as follows: By Theorem \ref{cyclic}, $H^d_{\frakm_A}(S_A) = D_A \cdot t^{-\alpha}$.
 Since $\NN A $ is scored, $\ZZ^d \cap \QQ(A \cap \tau) = \ZZ(A \cap \tau)$ for all faces $\tau$. Hence, for $\beta \in \ZZ^d$, $\beta \nsim -\alpha$
 if and only if $\beta \in \NN A + \ZZ(A \cap \tau)$ for some face $\tau$ if and only if $\beta \in \NN A + \ZZ(A \cap \sigma)$ for some facet $\sigma$. Therefore, we see that $I(\frakm_{-\alpha}) \cdot t^{-\alpha} = 0$ by \eqref{ishida}.
 Thus there exists a surjective $D_A$-homomorphism from $L(\frakm_{-\alpha})$ to $H^d_{\frakm_A}(S_A)$. 
 Since $L(\frakm_{-\alpha})$ is irreducible, it is an isomorphism.
 \end{remark}

\end{document}